\begin{document}

\title{Anderson acceleration with approximate calculations: applications to scientific computing\protect\thanks{This manuscript has been authored in part by UT-Battelle, LLC, under contract DE-AC05-00OR22725 with the US Department of Energy (DOE). The US government retains and the publisher, by accepting the article for publication, acknowledges that the US government retains a nonexclusive, paid-up, irrevocable, worldwide license to publish or reproduce the published form of this manuscript, or allow others to do so, for US government purposes. DOE will provide public access to these results of federally sponsored research in accordance with the DOE Public Access Plan (\url{http://energy.gov/downloads/doe-public-access-plan}).}}

\author[1]{Massimiliano Lupo Pasini*}

\author[2]{M. Paul Laiu}

\authormark{Massimiliano Lupo Pasini \textsc{et al}}

\address[1]{\orgdiv{Computational Sciences and Engineering Division}, \orgname{Oak Ridge National Laboratory}, \orgaddress{1 Bethel Valley Road, \state{Tennessee}, 37831, \country{USA}}}

\address[2]{\orgdiv{Computer Science and Mathematics Division}, \orgname{Oak Ridge National Laboratory}, \orgaddress{1 Bethel Valley Road, \state{Tennessee}, 37831, \country{USA}}}

\corres{*Massimiliano Lupo Pasini, 1 Bethel Valley Road, Mail Stop 6085, P.O. Box 2008, Oak Ridge, TN, 37831, USA. \email{lupopasinim@ornl.gov}}

\presentaddress{1 Bethel Valley Road, Mail Stop 6085, P.O. Box 2008, Oak Ridge, TN, 37831, USA}

\abstract[Summary]{We provide rigorous theoretical bounds for Anderson acceleration (AA) that allow {for approximate calculations when applied to solve linear problems.
We show that, when the approximate calculations satisfy the provided error bounds, the convergence of AA is maintained while the computational time could be reduced.}
{We also provide computable heuristic quantities, guided by the theoretical error bounds, which can be used to automate the tuning of accuracy while performing approximate calculations. For linear problems}, the use of heuristics to monitor the error introduced by approximate calculations, combined with the check on monotonicity of the {residual}, ensures the convergence of the numerical scheme within a prescribed {residual} tolerance. 
{Motivated by the theoretical studies}, we propose a reduced variant of AA, which consists in projecting the least-squares used to compute the Anderson mixing onto a subspace of reduced dimension. 
The dimensionality of this subspace adapts dynamically at each iteration as prescribed by the computable heuristic quantities. 
We numerically show and assess the performance of AA with approximate calculations on: (i) linear deterministic fixed-point iterations arising from the Richardson's scheme to solve linear systems with open-source benchmark matrices with various preconditioners and (ii) non-linear deterministic fixed-point iterations arising from non-linear time-dependent Boltzmann equations.}

\keywords{Anderson acceleration, Fixed-point, Picard iteration }

\maketitle

\footnotetext{\textbf{Abbreviations:} AA, Anderson Acceleration; AAR, Alternating Anderson-Richardson; AR, Anderson-Richardson; FOM, Full Orthogonalization Method; GMRES, Generalized Minimum Residual; HPC, high-performance computing; ILU, Incomplete LU factorization; ILUT, Incomplete LU factorization with thresholding; IMEX, implicit-explicit; LHS, left-hand side; MM, Matrix Market; RHS, right-hand side}

\section{Introduction}
Efficient fixed-point schemes are needed in many complex physical applications such as: (i) iterative algorithms to solve sparse linear systems, (ii) Picard iterations \cite{Kewang2022} to solve systems of non-linear partial differential equations occurring in computational fluid dynamics \cite{rebholz, 10.1093/imanum/draa095}, semiconductor modelling \cite{doi:10.1137/20M1365922, LAIU2020109567}, and astrophysics \cite{Paul_Laiu_2020, Paul_Laiu_2021}, and (iii) solving extended-space full-waveform inversion (FWI) \cite{dx.doi.org/10.1190/geo2021-0409.1}.

For scientific applications, the standard fixed-point scheme often converges slowly because the fixed-point operator is either weakly globally contractive or is not globally contractive at all. 
The Anderson acceleration (AA) \cite{Anderson} is a multi-secant method \cite{brezinski_shanks_2018, 10.1093/imanum/drab061} that has been widely used either to improve the convergence rate of convergent fixed-point schemes or to restore convergence when the original fixed-point scheme is not convergent \cite{Fang, walker_anderson_2011, potra_characterization_2013, zhang2020, saad2021}. In particular, the convergence of AA has been studied with respect to specific properties of different scientific applications \cite{toth_convergence_2015, toth_local_2017, Mai2019, doi:10.1137/20M132938X}.

AA requires solving a least-squares {problem} at each fixed-point iteration, and this can be computationally expensive for scientific applications that involve large-scale calculations, especially when such calculations are distributed on high-performance computing (HPC) platforms.
To reduce the computational cost of AA, a recently proposed variant of AA called Alternating Anderson Richardson (AAR) \cite{banerjee_periodic_2016, AAJ, AAR} performs multiple fixed-point iterations between two consecutive Anderson mixing corrections. The number of fixed-point iterations between two consecutive Anderson mixing corrections can be arbitrarily set by the user. When the number is set to one, AAR reduces back to AA. Recent theoretical results have shown that AAR is more robust than the standard AA against stagnations \cite{lupo_pasini_convergence_2019}. 
However, even if performed intermittently, computing the Anderson mixing via least-squares can still be computationally expensive when the number of rows in the tall and skinny matrix that defines the left-hand side (LHS) of the least-squares problem is large. 
One possibility to reduce the computational cost is to perform approximate/inexact calculations in the AAR scheme. To avoid compromising the convergence of AAR, the computation accuracy reduction must be performed judiciously. 
For specific problems, the accuracy has been reduced by projecting the least-squares {problem} in the Anderson mixing computation onto an appropriate projection subspace, which results in computational savings without affecting the final convergence of the fixed-point scheme \cite{doi:10.1137/19M1290097, doi.org/10.1111/cgf.14081}. 
{In these works, the problem is formulated as a non-linear non-convex constrained optimization problem, which is recast into a fixed-point problem with a computationally inexpensive fixed-point operator based on the alternating direction method of multipliers (ADMM).} The accuracy reduction adopted to reduce the computational time to evaluate the original fixed-point operator has been controlled by using physics information about the problem at hand to identify an effective projection subspace. However, physics-driven guidelines are difficult (and often impossible) to determine because of the lack of structure in the fixed-point operator. In this situation, general guidelines (not necessarily physics driven) to judiciously reduce the accuracy in solving the least-squares problem are needed. 

We provide rigorous theoretical bounds for AAR on \textit{linear} fixed-point iterations that establish general guidelines to reduce the accuracy of the calculations performed by AAR while still ensuring that the final residual of the fixed-point scheme drops below a user-defined convergence threshold. 
By interpreting the accuracy reduction in AAR calculations as a perturbation to the original least-squares {problem} in the Anderson mixing computation, we assess how much accuracy can be sacrificed to limit the communication and computational burden without compromising convergence. Along the same lines as
previously published theoretical results in \cite{simoncini} for well-established Krylov methods \cite{Saad}, our analysis concludes that the convergence of AAR can be maintained while allowing to progressively reduce the accuracy for the calculations within the AAR scheme. 

Our theoretical results allow for accuracy reduction in different calculations performed by AAR {on linear fixed-point problems}. When the fixed-point operator evaluations are the dominant computational cost of AAR, one may choose to approximate the evaluations of the fixed-point operator to reduce the computational cost of the evaluations. When instead solving the least-squares problem for the Anderson mixing is the most expensive step of AAR, one may solve the least-squares problem approximately to reduce the computational cost of the least-squares solver. 
Our theoretical results open a new path to efficiently apply AAR to solve problems in addition to sparse linear systems, including non-linear fixed-point iterations arising from quantum mechanics, plasma astrophysics, and computational fluid dynamics where fixed-point operator evaluations are often expensive \cite{toth_local_2017}. In this context, the use of AAR allows to leverage less expensive operator evaluations without affecting the final attainable accuracy of the non-linear physics solver. 

Since the theoretical bounds are defined in terms of quantities that are not always easily computable, we also propose computationally inexpensive heuristics while still maintaining a close relation to the rigorous theoretical estimates. The heuristics allow to dynamically adjust the accuracy in the AAR calculations {with minimal} computational overhead. 
We combine the heuristics with a check on the monotonic reduction of the residual norm across two consecutive Anderson mixing steps to ensure that the accuracy reduction in the AAR calculations does not compromise the convergence of the numerical scheme. 
The monotonicity check allows one to replace the tight theoretical bound with an empirical one, and dynamically tighten the tolerance of inexact calculations if non-monotonicity of the residual norm is observed across two consecutive AA steps. 

In the numerical sections, we assess the effectiveness of our heuristics using two different approaches to inject inaccuracies into AAR. The first approach injects error in the evaluations of the fixed-point operator, and the heuristics are used to dynamically adjust the magnitude of the injected error. The second approach projects the least-squares problem in the Anderson mixing computation onto a subspace, and the heuristics are used to dynamically adjust the dimension of projection subspace. 
Numerical results confirm that using approximate calculations to solve the least-squares {problem} for AA saves computational time without sacrificing convergence to a desired accuracy. 
The proposed method has appealing properties for HPC since it reduces computational requirements, inter-process communications, and storage requirements to solve least-squares {problem} in AA when the fixed-point operator is distributed across multi-node processors. 

The remainder of the paper is organized as follows. In Section~\ref{section:background} we briefly recall the stationary Richardson's method that solves linear systems by recasting them as linear fixed-point iterations. 
In Section~\ref{section:error_bounds} we provide rigorous error bounds that allow approximate calculations of AA for linear fixed-point iterations while still maintaining convergence.
In Section~\ref{section:reduced_aar} we {propose a new variant of AAR, which performs inexpensive approximate calculation by projecting the least-squares {problem} onto a subspace}, called Reduced AAR. 
{
In Section~\ref{section:numerical_results}, we illustrate the convergence property of AAR on linear fixed-point problems and demonstrate the computational advantage of the Reduced AAR solvers on examples including both linear and non-linear fixed-point problems. Even though the theoretical results are not directly applicable to the non-linear fixed-point solvers, the promising numerical results reported in Section~\ref{section:numerical_results} encourage further investigations.} 
We conclude the paper with remarks on the state-of-the-art and comments about future developments in Section~\ref{section:conclusions}.

\section{Fixed-point iteration}
The standard iterative method for solving a fixed-point problem
$
\mathbf{x} = G(\mathbf{x})$ with $\mathbf{x}\in\mathbb{R}^n$ and $G:\mathbb{R}^n \rightarrow \mathbb{R}^n$ is the fixed-point iteration:
\begin{equation}
\mathbf{x}^{k+1} = G(\mathbf{x}^{k}), \quad k=0,1,\ldots,
\label{fixed_point}
\end{equation}
and the residual $\mathbf{r}^k \in \mathbb{R}^n$ of the fixed-point iteration is defined as 
\begin{equation}
\mathbf{r}^k = G(\mathbf{x}^k)-\mathbf{x}^{k} , \quad k=0,1,\ldots\:.
\end{equation}
The convergence of the fixed-point iteration relies on the global contraction property of the non-linear fixed-point operator $G$, { i.e., there is a constant $c<1$ such that, for every $\mathbf{x}\in \mathbb{R}^n$ and $\mathbf{y}\in \mathbb{R}^n$, $\lVert G(\mathbf{x}) - G(\mathbf{y})\rVert_2\le c \lVert \mathbf{x} - \mathbf{y}\rVert_2$. When such $c$ is close to one,} the fixed-point iteration may converge at an unacceptably slow rate.

\subsection{Anderson acceleration and the Alternating Anderson acceleration}
The AA was proposed to accelerate the fixed-point iteration \cite{Anderson}. There exist several equivalent formulations of AA \cite{Fang, toth_convergence_2015}. The formulation we adopt for convenience in the analysis is described in Algorithm \ref{AAR_algorithm}.
Define
\begin{equation}
    X_k = [(\mathbf{x}^{k-\ell+1} - \mathbf{x}^{k-\ell}), \ldots, (\mathbf{x}^k-\mathbf{x}^{k-1}) ]\in \mathbb{R}^{n\times \ell}
\end{equation}
and 
\begin{equation}
    R_k = [(\mathbf{r}^{k-\ell+1} - \mathbf{r}^{k-\ell}), \ldots, (\mathbf{r}^k-\mathbf{r}^{k-1}) ]\in \mathbb{R}^{n\times \ell},
\end{equation}
where $\ell=\min\{k,m\}$ and $m$ is an integer that describes the maximum number of previous terms of the sequence $\{\mathbf{x}^k\}_{k=0}^{\infty}$ used to compute the AA update. Denoting with $\lVert \cdot \rVert_2$ the $\ell^2$-norm of a vector, the vector $\mathbf{g}^k = [g^{(k)}_1, \ldots, g^{(k)}_{\ell}]^T \in \mathbb{R}^\ell$ defined as
\begin{equation}
    \mathbf{g}^k=\underset{\mathbf{g}\in \mathbb{R}^{\ell}}{\operatorname{argmin}}\lVert R_k\mathbf{g} - \mathbf{r}^k\rVert_2^2
   \label{least_squares_matrix_form}
\end{equation}
is used to update the sequence $\{\mathbf{x}^k\}_{k=0}^{\infty}$ through an Anderson mixing as follows
\begin{equation}
    \mathbf{x}^{k+1} = \mathbf{x}^k + {\mathbf{r}^k} - (X_k+R_k) \mathbf{g}^k.
    \label{anderson_original}
\end{equation}

\begin{algorithm}
\caption{(Alternating) Anderson Acceleration}
\label{AAR_algorithm}
\begin{algorithmic}
\State{Data: $\mathbf{x}^0\in\mathbb{R}^n$, $p\in\mathbb{N}$; \hfill \Comment{$// \texttt{AA: } p=1,\,\,\texttt{Alternating AA: } p>1$}}
\State{Compute $\mathbf{r}^0 = G(\mathbf{x}^0)-\mathbf{x}^0$ and $\mathbf{x}^1=\mathbf{x}^0+\mathbf{r}^0$;}
\State{Set $k=1$}
\While{$\displaystyle \lVert \mathbf{r}^{k-1} \rVert_2>tol$}
 \State{Compute $\ell=\min\{k,m\}$}
 \State{Compute $\mathbf{r}^k=G(\mathbf{x}^k)-\mathbf{x}^k$}
 \If{$k\pmod p\ne 0$}
 \State{Update $\mathbf{x}^{k+1}=\mathbf{x}^k + \omega\mathbf{r}^k$}
 \Else
  \State{Set $X_k = [(\mathbf{x}^{k-\ell+1} - \mathbf{x}^{k-\ell}), \ldots, (\mathbf{x}^k-\mathbf{x}^{k-1}) ]\in \mathbb{R}^{n\times \ell}$}
 \State{Set $R_k = [(\mathbf{r}^{k-\ell+1} - \mathbf{r}^{k-\ell}), \ldots, (\mathbf{r}^k-\mathbf{r}^{k-1}) ]\in \mathbb{R}^{n\times \ell}$}
 \State{Compute  
 $\displaystyle
 \mathbf{g}^k=\underset{\mathbf{g}\in \mathbb{R}^{\ell}}{\operatorname{argmin}}\lVert R_k\mathbf{g} - \mathbf{r}^k\rVert_2^2
 $}
 \State{Update 
  ${\mathbf{x}^{k+1} = \mathbf{x}^k + \mathbf{r}^k - (X_k+R_k) \mathbf{g}^k}$.}
 \EndIf
 \State{$k=k+1$}
\EndWhile
 \Return{$\mathbf{x}^{k+1}$ }
\end{algorithmic}
\end{algorithm}

When compared to the fixed-point iteration, AA often requires fewer iterations to converge thus resulting in a shorter computational time. On the other hand, AA also introduces the overhead of solving the least-squares problem \eqref{least_squares_matrix_form} at each iteration.
This computation overhead is outweighed by the benefit from fewer iterations when solving problems in which evaluating the operator $G$ incurs the dominant cost. However, there are many problems where the cost of solving this least-squares problem incurs the main cost.

Solving a least-squares problem at each iteration is computationally expensive and requires global communications which introduce severe bottlenecks for the parallelization in HPC environments. 
P.~Suryanarayana and collaborators {\cite{AAR}} recently proposed to compute
an Anderson mixing after multiple Picard iterations, so that the cost of solving successive least-squares problems is reduced. This new AA variant, called \textit{Alternating AA}, has been shown to effectively accelerate both linear and non-linear fixed-point iterations \cite{AAJ, AAR}. Algorithm \ref{AAR_algorithm} describes the Alternating AA, where the parameter $p$ represents the number of Picard iterations separating two consecutive AA steps. Letting $p\rightarrow \infty$ reduces the Alternating AA to a standard fixed-point scheme with $\omega$ the relaxation parameter, while $p=1$ makes it coincide with the standard AA formulation.

In sections \ref{section:background} and \ref{AA_linear} we focus on the case when $G$ is linear and arises from iteratively solving a sparse linear system. 

\subsection{Stationary Richardson}
\label{section:background}
Consider a nonsingular sparse linear system 
\begin{equation}
A\mathbf{x}=\mathbf{b},
\label{linsys}
\end{equation}
where $A\in\mathbb{R}^{n\times n}$ and $\mathbf{x}$, $\mathbf{b} \in \mathbb{R}^n$. 
We assume that left preconditioning has already been applied.
Let $H := I-A$; adding $H\mathbf{x}$ on both sides of \eqref{linsys} leads to a linear fixed-point problem with the fixed-point iteration
\begin{equation}
\mathbf{x}^{k+1} = G(\mathbf{x}^k) := H\mathbf{x}^k + \mathbf{b},\quad k=0,1,\ldots\:.
\label{linear_fixed_iteration}
\end{equation}
A commonly used equivalent representation of \eqref{linear_fixed_iteration} is the correction form
\begin{equation}
\mathbf{x}^{k+1} =\mathbf{x}^k + \mathbf{r}^k,\quad k=0,1,\ldots,
\label{fixedpoint}
\end{equation}
where $\mathbf{r}^k = \mathbf{b}-A\mathbf{x}^k$ is the residual at the $k$th iteration.
The scheme \eqref{fixedpoint} converges to the solution if and only if the spectral radius $\rho(H)<1$. 
The scheme \eqref{linear_fixed_iteration} can be generalized by introducing a positive weighing parameter $\omega$:  
\begin{equation}
\mathbf{x}^{k+1}=(1-\omega)\mathbf{x}^k + \omega( H\mathbf{\mathbf{x}}^k+\mathbf{b}),
\label{weighted}
\end{equation}
which in correction form is equivalently represented as
\begin{equation}
\mathbf{x}^{k+1} = \mathbf{x}^k + \omega \mathbf{r}^{k}.
\label{weighted2}
\end{equation}
Equations \eqref{weighted} and \eqref{weighted2} are known as the \textit{stationary Richardson scheme}. 
{ It is also straightforward to verify that Equations~\eqref{weighted} and \eqref{weighted2} are equivalent to applying the fixed-point iteration to the linear operator $G_\omega(\mathbf{x}):=(I-\omega A)\mathbf{x} + \omega\mathbf{b}$.}

\subsection{Anderson acceleration for linear systems}
\label{AA_linear}
The AA is well suited to potentially accelerate Richardson schemes. 
In the case of a linear fixed-point iteration as in Equation \eqref{linear_fixed_iteration}, AA can be split into two steps. The first step consists in calculating the Anderson mixing
\begin{equation}
\overline{\mathbf{x}}^k = \mathbf{x}^k - X_k\mathbf{g}^k,
\label{anderson_correction}
\end{equation}
where the vector $\mathbf{g}^k$ is computed by solving the over-determined least-squares {problem} in Equation \eqref{least_squares_matrix_form}.
Under the assumption that $R_k$ is full rank, we have
$ \mathbf{g}^k  = (R^T_kR_k)^{-1}R^T_k\mathbf{r}^{k}$.
Since $R_k = -AX_k$ for a linear fixed-point iteration, \eqref{anderson_correction} can be recast as 
\begin{equation}
\overline{\mathbf{x}}^k 
= \mathbf{x}^k - X_k(R^T_kR_k)^{-1}R^T_k\mathbf{r}^{k} 
= \mathbf{x}^k + X_k(R^T_k A X_k)^{-1}R^T_k\mathbf{r}^{k}.
\end{equation}
Therefore, the Anderson mixing can be interpreted as a step of an \textit{oblique projection method} \cite[Chapter~5]{Saad}
with the subspace of corrections chosen as
$
\mathcal{V}_k = \mathcal{R}(X_k)
$
and the subspace of constraints as
$
\mathcal{W}_k = \mathcal{R}(R_k) = A \mathcal{R}(X_k) = A \mathcal{V}_k
$.
After the Anderson mixing is applied, $\overline{\mathbf{x}}^k$ is used to compute a new update via a standard Richardson's step
\begin{equation}
 \mathbf{x}^{k+1} = \overline{\mathbf{x}}^k + \omega \overline{\mathbf{r}}^k,
 \label{relaxation_step}
\end{equation}
where $\overline{\mathbf{r}}^k = \mathbf{b}-A \overline{\mathbf{x}}^k$.
It can be shown that the iterative scheme from Equations~\eqref{anderson_correction} and \eqref{relaxation_step} is equivalent to applying the standard Anderson mixing \eqref{anderson_original} to a fixed-point problem with operator $G_\omega(\mathbf{x}):=(I-\omega A)\mathbf{x} + \omega\mathbf{b}$.
If AA is applied at each fixed-point iteration ($p=1$), 
this scheme is called \textit{Anderson-Richardson} (AR) \cite{Anderson, Potra, Walker}.
Studies about AR have highlighted similarities between this method and GMRES \cite{Fang, fang_two_2009, Potra, walker_anderson_2011}. 
When $p>1$, we refer to this approach as AAR. Recent studies have shown also that AAR is more robust against stagnations than AR%
\footnote{
As in the earlier work \cite{Potra}, \textit{stagnation} is defined here as a situation where the approximate solution $\mathbf{x}^k$ does not change across consecutive iterations. The number of consecutive iterations in which the approximate solution does not change represents the extension of the stagnation.} \cite{lupo_pasini_convergence_2019}.

\subsubsection{Computational cost of AAR}
\label{cost_aar}
Denoting the {average} number of non-zero entries per row of a sparse $n\times n$ matrix {$A$ as $\texttt{nnz}(A)$} and assuming for simplicity that this number is roughly constant across the rows, the computational complexity of a sparse matrix-vector multiplication is $\mathcal{O}({\texttt{nnz}(A)\,}n)$. 
The least-squares problem in Equation \eqref{least_squares_matrix_form} used to perform AA is {solved} using the QR factorization \cite{golub13} of the $n\times m$ matrix $R_k$. 
The main components used in Algorithm \ref{AAR_algorithm} to perform AA with their computational costs expressed with the big-$\mathcal{O}$ are:
\begin{itemize}
    \item QR factorization to perform AA: $\mathcal{O}(nm^2)$
    \item matrix-vector multiplication: $\mathcal{O}({\texttt{nnz}(A)\,}n)$.
\end{itemize}
AAR mitigates the computational cost to perform an AA step by interleaving successive AA step with $p$ relaxation sweeps. 
Relaxing the frequency at which the least-squares {problem} for AA is {solved} reduces the computational cost per iteration, but it is recommended not to excessively relax the frequency to avoid severely deteriorating the convergence rate of the overall numerical scheme. { In practice, $p$ is often set to a value between three and six \cite{lupo_pasini_convergence_2019}}.
The {average} computational complexity of AAR is
\begin{equation}
    \mathcal{O}\bigg( {\texttt{nnz}(A)\,} n + \frac{1}{p} nm^2\bigg).
\end{equation}
We can see the $\frac{1}{p}$ factor in front of the computational cost of the QR factorization allows to mitigate the impact that the least-squares solve has over the total computational cost of the numerical scheme.

In large-scale distributed computational environments, solving the global least-squares {problem} at periodic intervals may still not be sufficient to avoid severe bottlenecks in the computation. 
{To further reduce the computational cost, one could choose to adopt less expensive, approximate fixed-point operator evaluations or to solve the least-squares problems approximately.
In both cases, these approximate calculations could often be formulated as injections of errors in the residual terms $R_k$ and $\mathbf{r}^k$ in the least-squares problems.}
To this end, in the following section we conduct an error analysis that provides theoretical bounds to estimate the error affordable at each iteration of AAR, which will allow us to develop numerical strategies to reduce the cost of the least-squares solves.

\section{Backward stability analysis of approximate least-squares solves for AAR}
\label{section:error_bounds}
The solution obtained by solving approximately the least-squares problem \eqref{least_squares_matrix_form} can be treated as the exact solution of a different least-squares problem, which can be looked at as a yet unknown perturbation of the original problem \eqref{least_squares_matrix_form}. 
The objective of this section is to estimate the difference between the original least-squares problem \eqref{least_squares_matrix_form} and its perturbation. This is important in order to ensure backward stability \cite{saunders, doi:10.1137/17M1157106}, which is a desired property of the numerical scheme. 
{By combining the backward stability and the conditioning of the problem, we then provide a forward error estimate at the end of this section (see Corollary~\ref{cor:forward}).}
{As discussed in the last section, approximate calculations considered here} may arise from approximate evaluations of the fixed-point operator or from approximate calculations in the solver for the least-squares {problem} \eqref{least_squares_matrix_form}. 

Analogous to the approach adopted in the convergence analysis of AAR \cite{lupo_pasini_convergence_2019}, we provide the backward stability analysis under the assumption that the AAR is not truncated, i.e., $m=k$, which henceforth we refer to as ``Full AAR''. {We note that the $m=k$ assumption is needed to show that, on linear problems, AA is equivalent to GMRES barring stagnations. \cite{walker_anderson_2011} Hence, it is common to assume $m=k$ in the convergence proofs for variants of AA on linear problems.\cite{lupo_pasini_convergence_2019}}%
\footnote{
The $m=k$ assumption is made so that we can leverage the existing convergence analysis in \cite{lupo_pasini_convergence_2019} for AAR when solving linear systems, which extends equivalence between standard AA and GMRES shown in \cite{walker_anderson_2011}, also under this $m=k$ assumption.
For linear problems with positive definite matrix $A$, the $m=k$ assumption is not needed in the convergence proof of the Augmented AAR solver \cite[Section 3.3]{lupo_pasini_convergence_2019}.
When $A$ is positive definite, the backward stability analysis considered in this section extends to the truncated version of the Augmented AAR solver.} 
We focus on the backward stability analysis of Full AAR in the case that the mixing coefficients $\mathbf{g}^k$ solves an inexact or perturbed version of \eqref{least_squares_matrix_form}.
Specifically, we consider the following two cases:
(i) {when backward perturbations are permitted only to the} matrix $R_k$ on the left-hand side (LHS) of the least-squares system, which covers the scenario when the numerical methods performs an approximate factorization of $R_k$ (e.g., approximate QR or low-rank singular value decomposition), and
(ii) {when backward perturbations are permitted to} both the LHS matrix $R_k$ and the residual vector $\mathbf{r}^k$ on the right-hand side (RHS), {which occurs when matrix-vector multiplications are performed inexactly and/or} the least-squares {problem} \eqref{least_squares_matrix_form} is approximated by projection onto a subspace. 
The backward stability analyses for cases (i) and (ii) are given in Sections~\ref{inaccuracy_lhs} and \ref{inaccuracy_both}, respectively.%
\footnote{
In scientific applications where $m$ is fixed at $m<k$ and the history window of updates is cyclically erased, the theoretical results provided in this work can still be applied by restarting the iteration count from scratch every time the memory is erased.     
}

Let us denote with $\mathcal{E}_k \in \mathbb{R}^{n\times k}$ any perturbation to the matrix $R_k$ and $\delta \mathbf{r}_k \in\mathbb{R}^n$ any perturbation to the residual $\mathbf{r}^k$.
The $i$th column of $\mathcal{E}_k$ can be modeled as an error matrix $E_i\in\mathbb{R}^{n\times n}$ applied to the $i$th columns of $R_k$, i.e.,
\begin{equation}
    \mathcal{E}_k = [E_1{ (\mathbf{r}^1-\mathbf{r}^0)}, \ldots, E_k{  (\mathbf{r}^{k}-\mathbf{r}^{k-1})}]\:.
\end{equation}    
We then denote the perturbed LHS as
\begin{equation}
    \hat{R}_k = R_k + \mathcal{E}_k = R_k + [E_1{ (\mathbf{r}^1-\mathbf{r}^0)}, \ldots, E_k{  (\mathbf{r}^{k}-\mathbf{r}^{k-1})}].
\end{equation}
Using $\mathbf{r}^i - \mathbf{r}^{i-1} = -A(\mathbf{x}^i - \mathbf{x}^{i-1})$ for $i=1,\ldots,k$, 
we obtain
\begin{equation}
    \hat{R}_k = R_k {  -} [E_1{ A}(\mathbf{x}^1-\mathbf{x}^0), \ldots, E_k{  A }(\mathbf{x}^{k}-\mathbf{x}^{k-1})].
    \label{eq:perturbed_LHS}
\end{equation}
As for the perturbed RHS, we denote it as 
\begin{equation}
    \hat{\mathbf{r}}^k = \mathbf{r}^k + \delta \mathbf{r}^k\:.
    \label{eq:perturbed_RHS}
\end{equation}

{

To carry out the backward stability analysis in the following sections, we make the following assumption which will remain valid for the remainder of this paper.
\begin{assumption}
\label{FullrankAssumption}
The matrix $R_k$ is full rank, and the perturbation $\mathcal{E}_k$ does not compromise the full-rank property of $\hat{R}_k$.%
\end{assumption}
The full-rank property of $R_k$ is not needed to ensure convergence of Full AAR \cite{lupo_pasini_convergence_2019}. However, we need this property in the following theoretical analysis in order to ensure backward stability. In fact, requiring that $R_k$ is full-rank already excludes the risk of stagnation \cite{lupo_pasini_convergence_2019}. The same reasoning applies when $R_k$ is replaced by $\hat{R}_k$ to allow for inaccuracy in the least-squares calculations. 
}

We denote the QR factorizations \cite{Golub} of $R_k$ and $\hat{R}^k$ computed with a backward stable numerical method such as modified Gram-Schmidt or Householder transformations respectively as
\begin{equation}
    R_k = Q_k T_k \quad\text{and}\quad \hat{R}_k = \hat{Q}_k \hat{T}_k,
\end{equation}
where $Q_k \in \mathbb{R}^{n\times k}$ and $\hat{Q}_k \in \mathbb{R}^{n\times k}$ are matrices with orthogonal columns and $T_k \in \mathbb{R}^{k\times k}$
and $\hat{T}_k \in \mathbb{R}^{k\times k}$ are upper triangular matrices.

\subsection{Backward stability analysis of AAR with perturbed LHS}
\label{inaccuracy_lhs}
In this subsection, we consider the case where the vector of mixing coefficients {$\mathbf{g}^k$ in \eqref{anderson_correction} is approximated by the solution ${\hat{\mathbf{g}}^k}$ of a perturbed least-squares problem}
\begin{equation}
  {\hat{\mathbf{g}}^k} =\underset{\mathbf{g}\in \mathbb{R}^{k}}{\operatorname{argmin}}\lVert  \hat{R}_k  \mathbf{g} - \mathbf{r}^k  \rVert_2^2,
  \label{least_squares_perturbed_matrix}
\end{equation}
where the perturbed LHS matrix $\hat{R}_k$ is defined in \eqref{eq:perturbed_LHS}.
The optimal backward error of the least-squares in \eqref{least_squares_perturbed_matrix} is defined as \cite{saunders} 
\begin{equation}
    \min_{\mathcal{E}_k}\{ \lVert \mathcal{E}_k \rVert_F \; \text{:} \; \lVert  (R_k + \mathcal{E}_k)   {\hat{\mathbf{g}}^k} - \mathbf{r}^k  \rVert_2^2 = \min\},
\end{equation}
where $\lVert \cdot \rVert_F$ denotes the Frobenius norm. 
{Denote with $z$ a positive integer}. When the Anderson mixing is performed at iteration {$k=z p$}, 
we define the perturbation from the backward error as 
\begin{equation}
    \delta_k = \| \mathcal{E}_k {\hat{\mathbf{g}}^k} \|_2 = 
    \lVert [E_1{ A}(\mathbf{x}^1-\mathbf{x}^0), \ldots, E_k{ A}(\mathbf{x}^{k}-\mathbf{x}^{k-1})]{ {\hat{\mathbf{g}}^k} } \rVert_2.
    \label{distance}
\end{equation}
Assume that $k={z} p$ iterations of the AAR have been carried out with approximate residual evaluations. 
The perturbation $\delta_k$ is bounded by 
\begin{equation}
    \delta_k 
    \le \sum_{i=1}^k \lvert {\hat{g}_i}^{(k)}\rvert \lVert E_i \rVert_2 { \lVert A \rVert_2} \lVert \mathbf{x}^i-\mathbf{x}^{i-1}\rVert_2,
\end{equation}
where $\lVert \cdot \rVert_2$ denotes the matrix norm induced by the $\ell^2$-norm of a vector. 


The following theoretical results provide error bounds on the affordable perturbation introduced into the least-squares problem by approximate calculations to ensure that the residuals of AAR iterates on the linear system \eqref{linsys} decreases below a user-defined threshold $\epsilon$.
\begin{lemma}
\label{lemma1}
Assume that $k={z}p$ iterations of Full AAR have been carried out. Let ${\hat{\mathbf{g}}}^{k}$ be the Anderson mixing computed by solving the perturbed least-squares problem defined in Equation \eqref{least_squares_perturbed_matrix}. Then, the following inequality holds
\begin{equation}    \label{estimate_lemma1}
    \lvert {\hat{g}}_i^{(k)} \rvert \le  \frac{1}{\sigma_{\min}(\hat{T}_k)} \lVert \mathbf{r}^k \rVert_2 \quad \text{for any}\quad i= 1,\ldots,k. 
\end{equation}
\begin{proof}
{ Let us denote with $\mathbf{r}^k_{\text{proj}}$ the $\ell^2$-orthogonal projection of the residual $\mathbf{r}^k$ onto the column space of $\hat{R}_k$.} The use of the projected residual allows to recast the perturbed least-squares problem in Equation \eqref{least_squares_perturbed_matrix} as 
$\hat{R}_k {\hat{\mathbf{g}}^k} = {\mathbf{r}^k_{\text{proj}}}$. 
By replacing the matrix $\hat{R}_k$ with its factorization $\hat{R}_k=\hat{Q}_k \hat{T}_k$ in the least-squares problem 
and applying $(\hat{T}_k)^{-1} \hat{Q}_k^T$ on both sides of ${ \hat{R}_k \hat{\mathbf{g}}^k} = {\mathbf{r}^k_{\text{proj}}}$,
we obtain 
\begin{equation}
   {\hat{\mathbf{g}}^k}= (\hat{T}_k)^{-1} \hat{Q}_k^T {\mathbf{r}^k_{\text{proj}}}.
\end{equation}
Then it follows that
\begin{equation}
\begin{aligned}
\lVert {\hat{\mathbf{g}}^k}\rVert_2 & = \lVert (\hat{T}_k)^{-1} \hat{Q}_k^T {\mathbf{r}^k_{\text{proj}}} \rVert_2 \le  
\lVert (\hat{T}_k)^{-1} \rVert_2  \lVert \hat{Q}_k^T {\mathbf{r}^k_{\text{proj}}} \rVert_2 = \lVert (\hat{T}_k)^{-1} \rVert_2  \lVert {\mathbf{r}^k_{\text{proj}}} \rVert_2
\end{aligned}
\end{equation}
where the last equality is obtained using the property of orthogonal matrices acting as isometric transformations. 
{The property of the projection operator always ensures that $\lVert \mathbf{r}^k_{\text{proj}} \rVert_2 \leq \lVert \mathbf{r}^k \rVert_2 $, thus allowing the following step}
\begin{equation}
\lVert {\hat{\mathbf{g}}^k}\rVert_2 \le \lVert (\hat{T}_k)^{-1} \rVert_2  \lVert \mathbf{r}^k \rVert_2.
\end{equation}
Since $\lvert { \hat{g}_i^{(k)}} \rvert \le \lVert {\hat{\mathbf{g}}^k} \rVert_2$ for $i=1,\ldots,k,$ and
\begin{equation}
    \lVert (\hat{T}_k)^{-1}  \rVert_2 = \left({\sigma_{\min}(\hat{T}_k)}\right)^{-1}
\end{equation}
with $\sigma_{\min}(\hat{T}_k)$ the smallest non-zero singular value of $\hat{T}_k$, 
the lemma follows.
\end{proof}
\end{lemma}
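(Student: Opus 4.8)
The plan is to exploit the fact that the minimizer $\hat{\mathbf{g}}^k$ of the perturbed least-squares problem \eqref{least_squares_perturbed_matrix} is characterized by orthogonality of its residual to the column space of $\hat{R}_k$, and then to use the QR factorization $\hat{R}_k = \hat{Q}_k \hat{T}_k$ to obtain an explicit representation of $\hat{\mathbf{g}}^k$ whose norm is controlled by the smallest singular value of the triangular factor $\hat{T}_k$. Since the componentwise estimate in \eqref{estimate_lemma1} is dominated by the full $\ell^2$-norm of the coefficient vector, via $\lvert \hat{g}_i^{(k)} \rvert \le \lVert \hat{\mathbf{g}}^k \rVert_2$, it suffices to bound $\lVert \hat{\mathbf{g}}^k \rVert_2$.

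First I would observe that the minimizer satisfies the normal-equation characterization $\hat{R}_k \hat{\mathbf{g}}^k = \mathbf{r}^k_{\text{proj}}$, where $\mathbf{r}^k_{\text{proj}}$ is the $\ell^2$-orthogonal projection of $\mathbf{r}^k$ onto $\mathcal{R}(\hat{R}_k)$. Substituting $\hat{R}_k = \hat{Q}_k \hat{T}_k$ and applying $(\hat{T}_k)^{-1}\hat{Q}_k^T$ on the left yields the closed form $\hat{\mathbf{g}}^k = (\hat{T}_k)^{-1}\hat{Q}_k^T \mathbf{r}^k_{\text{proj}}$; this step is legitimate because Assumption~\ref{FullrankAssumption} guarantees that $\hat{T}_k$ is invertible and that $\hat{Q}_k$ has orthonormal columns, so $\hat{Q}_k^T \hat{Q}_k = I$. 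Taking norms and using submultiplicativity then gives $\lVert \hat{\mathbf{g}}^k \rVert_2 \le \lVert (\hat{T}_k)^{-1} \rVert_2 \, \lVert \hat{Q}_k^T \mathbf{r}^k_{\text{proj}} \rVert_2$.

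Next I would simplify $\lVert \hat{Q}_k^T \mathbf{r}^k_{\text{proj}} \rVert_2$. Because $\mathbf{r}^k_{\text{proj}} \in \mathcal{R}(\hat{Q}_k)$, writing $\mathbf{r}^k_{\text{proj}} = \hat{Q}_k \mathbf{c}$ shows $\hat{Q}_k^T \mathbf{r}^k_{\text{proj}} = \mathbf{c}$ while $\lVert \mathbf{r}^k_{\text{proj}} \rVert_2 = \lVert \mathbf{c} \rVert_2$, so $\hat{Q}_k^T$ acts isometrically on this subspace and $\lVert \hat{Q}_k^T \mathbf{r}^k_{\text{proj}} \rVert_2 = \lVert \mathbf{r}^k_{\text{proj}} \rVert_2$. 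Since orthogonal projection is nonexpansive, $\lVert \mathbf{r}^k_{\text{proj}} \rVert_2 \le \lVert \mathbf{r}^k \rVert_2$. Combining these with the standard identity $\lVert (\hat{T}_k)^{-1} \rVert_2 = (\sigma_{\min}(\hat{T}_k))^{-1}$ and the elementary componentwise inequality $\lvert \hat{g}_i^{(k)} \rvert \le \lVert \hat{\mathbf{g}}^k \rVert_2$ immediately delivers the claimed bound \eqref{estimate_lemma1}.

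I expect the only genuinely delicate point to be the reliance on Assumption~\ref{FullrankAssumption}: the entire argument hinges on $\hat{T}_k$ being invertible, so that the closed form exists and $\sigma_{\min}(\hat{T}_k) > 0$ makes the right-hand side finite. This is precisely the full-rank property of $\hat{R}_k$ secured by the assumption. The remaining ingredients — the projection characterization of the least-squares solution, the isometry of $\hat{Q}_k^T$ on $\mathcal{R}(\hat{Q}_k)$, and the nonexpansiveness of orthogonal projection — are standard and essentially mechanical, so no substantive obstacle remains once full rank is in place.
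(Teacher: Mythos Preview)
Your proposal is correct and follows essentially the same approach as the paper's proof: both recast the least-squares solution via the projection $\mathbf{r}^k_{\text{proj}}$, invert through the QR factorization to obtain $\hat{\mathbf{g}}^k = (\hat{T}_k)^{-1}\hat{Q}_k^T \mathbf{r}^k_{\text{proj}}$, and then bound the norm using the isometry of $\hat{Q}_k^T$ on $\mathcal{R}(\hat{Q}_k)$, the nonexpansiveness of orthogonal projection, and the identity $\lVert (\hat{T}_k)^{-1}\rVert_2 = (\sigma_{\min}(\hat{T}_k))^{-1}$. Your write-up is in fact slightly more careful in making explicit the role of Assumption~\ref{FullrankAssumption} and the reason the isometry step holds, but the argument is the same.
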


Lemma \ref{lemma1} supports the demonstration of the following theorem, which provides a dynamic criterion for adjusting the accuracy of linear algebra operations that involve the matrix $\hat{R}_k$ in solving the least-squares, thus allowing for less expensive computations to solve \eqref{least_squares_perturbed_matrix}, while maintaining accuracy. 

\begin{theorem}
\label{thm1}
Let $\epsilon>0$.
Let $\mathbf{r}^{k}$ be the residual of AAR after $k$ iterations. Assume that $k={z}p$ iterations of Full AAR have been carried out. If for every $i\le k$,
\begin{equation}
    \lVert E_i \rVert_2 \le \frac{\sigma_{\min}(\hat{T}_k)}{k} 
    \frac{1}{\lVert \mathbf{r}^{k}\rVert_2}
    \frac{1}{\lVert \mathbf{x}^{i} - \mathbf{x}^{i-1} \rVert_2}\epsilon,
    \label{error_bound1}
\end{equation}
then $\delta_k \leq {\lVert A \rVert_2 \epsilon}$. 
\end{theorem}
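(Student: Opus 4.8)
The plan is to combine the triangle-inequality bound on $\delta_k$ already derived just before the theorem with the coefficient estimate from Lemma~\ref{lemma1}, and then to insert the hypothesis \eqref{error_bound1} term by term so that all the problem-dependent quantities cancel and only $\varepsilon$ survives. First I would recall the bound $\delta_k \le \sum_{i=1}^k \lvert \hat{g}_i^{(k)}\rvert\, \lVert E_i \rVert_2\, \lVert A \rVert_2\, \lVert \mathbf{x}^i - \mathbf{x}^{i-1}\rVert_2$, which already isolates the three factors — the mixing coefficient, the perturbation norm, and the step length — that the hypothesis is engineered to control. Next I would invoke Lemma~\ref{lemma1} to replace each $\lvert \hat{g}_i^{(k)}\rvert$ by its uniform upper bound $\sigma_{\min}(\hat{T}_k)^{-1}\lVert \mathbf{r}^k\rVert_2$, giving $\delta_k \le \sigma_{\min}(\hat{T}_k)^{-1}\lVert \mathbf{r}^k\rVert_2\, \lVert A\rVert_2 \sum_{i=1}^k \lVert E_i\rVert_2\, \lVert \mathbf{x}^i - \mathbf{x}^{i-1}\rVert_2$.

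The decisive step is then to substitute \eqref{error_bound1} into the sum. Each product $\lVert E_i\rVert_2\, \lVert \mathbf{x}^i - \mathbf{x}^{i-1}\rVert_2$ is bounded by $\tfrac{\sigma_{\min}(\hat{T}_k)}{k}\tfrac{1}{\lVert \mathbf{r}^k\rVert_2}\epsilon$, where the step-length factor has already cancelled against the denominator in the hypothesis. Multiplying through by the prefactor $\sigma_{\min}(\hat{T}_k)^{-1}\lVert \mathbf{r}^k\rVert_2\, \lVert A\rVert_2$ cancels both $\sigma_{\min}(\hat{T}_k)$ and $\lVert \mathbf{r}^k\rVert_2$, so each of the $k$ summands collapses to $\tfrac{1}{k}\epsilon\lVert A\rVert_2$. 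Summing over $i=1,\ldots,k$ yields $\delta_k \le \epsilon\lVert A\rVert_2 = \varepsilon$, which is exactly the claim.

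I expect no serious obstacle: the argument is essentially bookkeeping, and the mathematical content lies entirely in how \eqref{error_bound1} was constructed so that the factors telescope cleanly. The one point meriting a moment of care is the appearance of $\sigma_{\min}(\hat{T}_k)$ — the smallest singular value of the perturbed triangular factor — on the right-hand side of \eqref{error_bound1}: its use is legitimate precisely because Assumption~\ref{FullrankAssumption} guarantees that $\hat{R}_k$, and hence $\hat{T}_k$, is full rank, so $\sigma_{\min}(\hat{T}_k)>0$ and the bound supplied by Lemma~\ref{lemma1} is finite. I would also flag, without it affecting the proof, that the criterion is genuinely a posteriori in both $\lVert \mathbf{r}^k\rVert_2$ and $\sigma_{\min}(\hat{T}_k)$, consistent with its intended role as a dynamic, per-iteration accuracy threshold.
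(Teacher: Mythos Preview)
Your proposal is correct and follows essentially the same route as the paper: both start from the triangle-inequality bound on $\delta_k$, apply Lemma~\ref{lemma1} to replace each $\lvert \hat{g}_i^{(k)}\rvert$ by $\sigma_{\min}(\hat{T}_k)^{-1}\lVert \mathbf{r}^k\rVert_2$, and then substitute the hypothesis~\eqref{error_bound1} so that each summand reduces to $\tfrac{1}{k}\epsilon\lVert A\rVert_2$. Your additional remarks on the role of Assumption~\ref{FullrankAssumption} and the a posteriori nature of the bound are apt but go slightly beyond what the paper's proof records.
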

\begin{proof}
The definition of $\delta_k$ in Equation \eqref{distance} and the upper bound for $\lvert {\hat{g}_i^{(k)}}\rvert$ from Lemma \ref{lemma1} lead to the following chain of inequalities
\begin{equation}
     \delta_k  \le \sum_{i=1}^k \lvert { \hat{g}_i^{(k)}}\rvert {\lVert A \rVert_2} \lVert E_i \rVert_2  \lVert \mathbf{x}^i-\mathbf{x}^{i-1}\rVert_2 \le 
    \sum_{i=1}^k \frac{1}{\sigma_{\min}(\hat{T}_k)} \lVert \mathbf{r}^k \rVert_2 { \lVert A \rVert_2} \lVert E_i \rVert_2  \lVert \mathbf{x}^i-\mathbf{x}^{i-1}\rVert_2.
\end{equation}
Therefore, using the assumption \eqref{error_bound1},
we obtain 
\begin{equation}
     \delta_k  \le 
    \sum_{i=1}^k \frac{1}{\sigma_{\min}(\hat{T}_k)} \lVert \mathbf{r}^k \rVert_2 { \lVert A \rVert_2} \lVert E_i \rVert_2  \lVert \mathbf{x}^i-\mathbf{x}^{i-1}\rVert_2  \le  \sum_{i=1}^k \frac{1}{k} \epsilon \lVert A \rVert_2.
\end{equation}
\end{proof}
\begin{remark}
Lemma \ref{lemma1} and Theorem \ref{thm1} reproduce for Full AAR similar theoretical results \cite{simoncini} for GMRES and full orthogonalization method (FOM). 
Theorem \ref{thm1} states that one can afford performing more and more inaccurate evaluations of the residual throughout successive iterations and still ensure that Full AAR converges with the final residual below a prescribed threshold $\varepsilon$.
In particular, when the calculation of the residual is approximate due to the restriction of the residual onto a subspace, the theorem states that the restriction can be more and more aggressive as the residual norm decreases. 
\end{remark}

\begin{remark}
Theorem \ref{thm1} is of important theoretical value, but the upper bound for $\lVert E_i\rVert_2$ given in \eqref{error_bound1} is not easily checkable during the iterations for the following reasons: 
\begin{itemize}
\item the matrix $\hat{T}_k$ and the computed residual $\hat{\mathbf{r}}^k$ are not available at $i<k$ iterations.
\item $\hat{T}_k$ depends on the magnitude of the perturbations occurring in the approximate evaluations of the residual during $k$ steps.  
\item computing the minimum singular value $\sigma_{\min}(\hat{T}_k)$ is as expensive as the computation we want to avoid and therefore defeats the purpose.
\end{itemize}
\end{remark}
To address the first impracticality of the error bound in Equation \eqref{error_bound1}, we provide the following corollary that replaces $\mathbf{r}^k$ with $\mathbf{r}^i$, which is computable at iteration $i<k$. 
\begin{corollary}
\label{corollary1}
Let $\epsilon>0$.
Assume that $k={z}p$ iterations of Full AAR have been carried out. If for every $i\le k$,
\begin{equation}
    \lVert E_i \rVert_2 \le \frac{\sigma_{\min}(\hat{T}_k)}{k} 
    \frac{1}{\lVert \mathbf{r}^{i}\rVert_2}
    \frac{1}{\lVert \mathbf{x}^{i} - \mathbf{x}^{i-1} \rVert_2}\epsilon,
        \label{error_bound3}
\end{equation}
then $\delta_k \leq {\lVert A \rVert_2 \epsilon}$. 
\end{corollary}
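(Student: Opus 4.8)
The plan is to derive Corollary~\ref{corollary1} directly from Theorem~\ref{thm1} by showing that hypothesis \eqref{error_bound3} is at least as restrictive as hypothesis \eqref{error_bound1}. The only difference between the two bounds is that \eqref{error_bound3} uses the computable residual norm $\lVert \mathbf{r}^i \rVert_2$ at the earlier iteration $i$, whereas \eqref{error_bound1} uses the not-yet-available norm $\lVert \mathbf{r}^k \rVert_2$. First I would invoke the monotone decrease of the residual norm along the Full AAR iteration --- a property guaranteed by the convergence of Full AAR under Assumption~\ref{FullrankAssumption} and enforced in practice by the monotonicity check discussed in the introduction --- to conclude that $\lVert \mathbf{r}^k \rVert_2 \le \lVert \mathbf{r}^i \rVert_2$ for every $i \le k$.

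From this monotonicity it follows immediately that $\tfrac{1}{\lVert \mathbf{r}^i \rVert_2} \le \tfrac{1}{\lVert \mathbf{r}^k \rVert_2}$, so the right-hand side of \eqref{error_bound3} is bounded above by the right-hand side of \eqref{error_bound1}. Consequently, any family of perturbation matrices $E_1,\ldots,E_k$ satisfying \eqref{error_bound3} also satisfies \eqref{error_bound1}, and Theorem~\ref{thm1} then yields $\delta_k \le \varepsilon$ with no further work.

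Alternatively, I would give a self-contained derivation that mirrors the proof of Theorem~\ref{thm1}: starting from the bound on $\delta_k$ obtained by combining \eqref{distance} with Lemma~\ref{lemma1}, I would substitute \eqref{error_bound3} in place of \eqref{error_bound1}. This produces the intermediate estimate $\delta_k \le \sum_{i=1}^k \tfrac{1}{k}\,\tfrac{\lVert \mathbf{r}^k \rVert_2}{\lVert \mathbf{r}^i \rVert_2}\,\varepsilon$, after which the monotonicity bound $\lVert \mathbf{r}^k \rVert_2 / \lVert \mathbf{r}^i \rVert_2 \le 1$ collapses each summand to $\tfrac{1}{k}\varepsilon$ and gives $\delta_k \le \varepsilon$. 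The two routes are algebraically identical; the reduction to Theorem~\ref{thm1} is the more economical presentation.

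The main obstacle is not the algebra, which is routine, but the justification of the monotonicity $\lVert \mathbf{r}^k \rVert_2 \le \lVert \mathbf{r}^i \rVert_2$. I expect this to be the crux: it must be argued either as an intrinsic property of the (possibly perturbed) Full AAR iteration under Assumption~\ref{FullrankAssumption}, or, more pragmatically, as a hypothesis supplied by the monotonicity safeguard. If monotonicity cannot be assumed unconditionally, the cleanest remedy is to state it as an explicit premise of the corollary, since without $\lVert \mathbf{r}^k \rVert_2 \le \lVert \mathbf{r}^i \rVert_2$ the ratio $\lVert \mathbf{r}^k \rVert_2 / \lVert \mathbf{r}^i \rVert_2$ need not be bounded by one and the conclusion $\delta_k \le \varepsilon$ may fail.
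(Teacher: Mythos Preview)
Your approach is correct and matches the paper's implicit reasoning: the paper states Corollary~\ref{corollary1} without proof, relying on the monotone decrease of the residual norm along Full AAR (invoked later in the text via \cite{lupo_pasini_convergence_2019} under Assumption~\ref{FullrankAssumption}) to replace $\lVert \mathbf{r}^k\rVert_2$ by the larger quantity $\lVert \mathbf{r}^i\rVert_2$ in the bound of Theorem~\ref{thm1}. Your identification of monotonicity as the crux---and the caveat that in the perturbed setting it may need to be an explicit premise---is exactly the right observation.
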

\begin{remark}
We point out that the error bound \eqref{error_bound3} for any iteration $i$ relies on the iteration index $k$. Convenient values of $k=k^*$ that can be chosen are the maximum number of iterations allowed or the size of the matrix. 
\end{remark}
\begin{remark}
The singular value $\sigma_{\min}(\hat{T}_k)$ could go to zero when the iterate converges to a solution since the columns may become linearly dependent. Projecting the residual matrix onto an appropriate subspace, as proposed later in Section~\ref{section:reduced_aar}, may help recover the linear independence of the columns and thus increase the minimum singular value of $\hat{T}_k$.
\end{remark}

The error bound in Equation \eqref{error_bound3} is still impractical because $\sigma_{\min}(\hat{T}_k)$, unlike $\mathbf{r}^i$, is not computable at iteration $i$, for $i<k$.  
We propose to replace $\sigma_{\min}(\hat{T}_k)$ in \eqref{error_bound3} with a practical heuristic bound. 
{A choice that uses 1 as the heuristic bound for $\sigma_{\min}(\hat{T}_k)$ has been discussed in the context of GMRES \cite{simoncini}}, where it is mentioned that this heuristic bound may be occasionally too far from the theoretical bound, {resulting in residuals higher than the prescribed threshold}. To address this limitation, it was proposed \cite{simoncini} to replace 1 with an inexpensive estimate that resorts to an approximation of the minimum non-zero singular value of the matrix $A$. In this work, we propose to combine the original heuristic bound that uses 1 at the numerator with a check on the monotonicity of the residual norm across consecutive AA steps. In fact, theoretical results \cite{lupo_pasini_convergence_2019} ensure that for exact calculations Full AAR produces a sequence of AA updates that monotonically decreases the residual under Assumption~\ref{FullrankAssumption}. If the approximate calculations cause the residual norm to stagnate between two consecutive AA updates, we use this as an indication that the last approximate AA step may have injected too large errors in the calculations. To attempt recovering the monotonic decrease, we thus propose to revert to the previous AA iterate and then take the AA step again with more accurate least-squares solutions. 
This consideration allows us to propose the following practical guideline for {adaptively} adjusting the accuracy of the calculations at each iteration, {while} still ensuring convergence of the numerical scheme within a prescribed residual tolerance.  
{

\begin{remark}
\label{heuristic}
Let us denote the appropriately adjusted heuristic error estimate as
\begin{equation}
{\beta}_i = \displaystyle \frac{\gamma_i}{k} 
    \frac{1}{\lVert \mathbf{r}^{i}\rVert_2}
    \frac{1}{\lVert \mathbf{x}^{i} - \mathbf{x}^{i-1}\rVert_2}, 
    \label{formula_gamma}
\end{equation}
where $\gamma_i>0$ for any iteration $i$ with $i\le k$.

\noindent We propose a guideline that adaptively adjusts the accuracy of the least-squares solves for the Anderson mixing at iteration $k = zp$ in the following steps:
\begin{enumerate}
\item Fix an initial level of accuracy in the least-squares calculations. 
\item If $\lVert E_i \rVert_2 \le {\beta}_i \epsilon$  for any $i$ with $i\le k$
\begin{enumerate}
    \item If $\lVert \mathbf{r}^{k}\rVert_2 < \lVert \mathbf{r}^{(z-1)p}\rVert_2$, then compute the Anderson mixing, update $\mathbf{x}^k$, and proceed to the next iteration;
    \item Else, decrease the value of $\gamma_i$, increase the accuracy of the least-squares calculations, and go back to step 2.
\end{enumerate}
\item Else, increase the accuracy of the least-squares calculations and go back to step 2.
\end{enumerate}
\end{remark}
The condition in step 2 aims to estimate the amount of inaccuracy that can be tolerated while performing the least-squares calculations at the current AA step. Since the theoretical results suggest that AAR can tolerate more inaccuracy in the least-squares calculations as converging to the solution, the condition in step 2 checks that the algorithm is not wastefully demanding more accuracy than what is necessary.  

The value of $\gamma_i$ may be larger than $\sigma_{\min}(\hat{T}_k)$, which implies that the rigorous theoretical bound may not be respected. However, if the existing level of accuracy still ensures a monotonic decrease of the norm of the residual in step 2(a), then the current iteration still benefits the numerical scheme towards convergence and can be accepted. If instead the existing level of accuracy compromises the monotonic decrease of the norm of the residual, it is recommended to start again the current iteration and solve again the same least-squares problem more accurately. 

The guideline provided above is sufficient to ensure the final convergence of AAR within a prescribed tolerance of the residual. When the numerical scheme never allows for inaccuracy in the least-squares calculations, the numerical scheme operates as Full AAR and the existing theoretical results \cite{lupo_pasini_convergence_2019} for Full AAR already ensure the Full AAR iterate eventually satisfies a prescribed residual tolerance. 
}

\subsection{Backward stability analysis of AAR with perturbed LHS and RHS}
\label{inaccuracy_both}
In this subsection, we consider the case that the mixing coefficients $\mathbf{g}^k$ in \eqref{anderson_correction} is approximated by the solution ${\hat{\mathbf{g}}^k}$ of the perturbed least-squares problem
\begin{equation}
  {\hat{\mathbf{g}}^k} =\underset{\mathbf{g}\in \mathbb{R}^{{k}}}{\operatorname{argmin}}\lVert  \hat{R}_k  \mathbf{g} - \hat{\mathbf{r}}^k  \rVert_2^2,
  \label{least_squares_perturbed_matrix_and_rhs}
\end{equation}
where $\hat{R}_k$ and $\hat{\mathbf{r}}^k$ are defined in \eqref{eq:perturbed_LHS} and \eqref{eq:perturbed_RHS}, respectively.
When both LHS and RHS of the least-squares are perturbed \cite{saunders}, the optimal backward error to solve the least-squares problem in Equation \eqref{least_squares_perturbed_matrix_and_rhs} at a generic iteration $k$ is
\begin{equation}
    \min_{\mathcal{E}_k, \delta\mathbf{r}^k} \{ \lVert \mathcal{E}_k, \delta\mathbf{r}^k \rVert_F \; \text{:} \; \lVert  (R_k + \mathcal{E}_k) {\hat{\mathbf{g}}^k} - (\mathbf{r}^k+\delta\mathbf{r}^k)  \rVert_2^2 = \min \}.
\end{equation}
The analysis conducted in this section assumes that the error made on the RHS of the least-squares problem can be tuned in a practical way. For instance, when the least-squares problem is projected onto a subspace, the distance between the original residual vector and the projected residual vector, namely $\lVert \delta \mathbf{r}^k \rVert_2$, can be measured in a straightforward way and the projection techniques can be adjusted accordingly. As for the error made on the LHS, namely $\delta_k$, this is less straightforward to control as it generally results from a sequence of several manipulations performed on the matrix $R_k$. The backward error analysis performed in this section gives the requirements on the inaccuracy of the operations performed on the matrix $R_k$, given a prescribed inaccuracy in the evaluation of the RHS. 

To obtain an upper bound on $|{\hat{g}_i^{(k)}}|$, we apply the analysis in Lemma~\ref{lemma1} to \eqref{least_squares_perturbed_matrix_and_rhs} and obtain, for $i= 1,\ldots,k$,
\begin{equation}
    \lvert {\hat{g}_i^{(k)}} \rvert \le  \frac{1}{\sigma_{\min}(\hat{T}_k)} \lVert \hat{\mathbf{r}}^k \rVert_2 \leq \frac{1}{\sigma_{\min}(\hat{T}_k)} \bigg ( \lVert \mathbf{r}^k \rVert_2 + \lVert \delta \mathbf{r}^k \rVert_2 \bigg),
\end{equation}
where the second inequality follows from \eqref{eq:perturbed_RHS} and the triangle inequality.
Assuming that $\|\delta \mathbf{r}^k\|_2\leq \epsilon\|\mathbf{r}^k \|_2$, we obtain 
\begin{equation}
    \lvert {\hat{g}_i^{(k)}} \rvert 
    \le \frac{1}{\sigma_{\min}(\hat{T}_k)} (1+\epsilon) \lVert \mathbf{r}^k \rVert_2,  \quad \text{for any} \quad i= 1,\ldots,k.
    \label{eq:g_bound}
\end{equation}
By following the same linear algebra steps applied in Theorem \ref{thm1} where we replace $\lVert \mathbf{r}^k \rVert_2$ with $\lVert \mathbf{r}^k \rVert_2 + \lVert \delta \mathbf{r}^k \rVert_2$, we obtain the following theoretical result.

\begin{theorem}
\label{thm2}
Let $\epsilon>0$.
Let $\hat{\mathbf{r}}^{k}$ be the residual of AAR after $k$ iterations. Under the same hypotheses and notation of Lemma \ref{lemma1} and assuming that $\|\delta \mathbf{r}^k\|_2\leq \epsilon\|\mathbf{r}^k \|_2$, if for every $i\le k$,
\begin{equation}
    \lVert E_i \rVert_2 \le \frac{\sigma_{\min}(\hat{T}_k)}{k} 
    \frac{1}{\lVert \mathbf{r}^{k} \rVert_2}
    \frac{1}{\lVert \mathbf{x}^{i} - \mathbf{x}^{i-1} \rVert_2}\frac{\epsilon}{1+\epsilon},
    \label{error_bound2}
\end{equation}
then $\delta_k \leq {\lVert A \rVert_2\epsilon}$. 
\end{theorem}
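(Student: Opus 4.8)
The plan is to mirror the proof of Theorem~\ref{thm1} almost verbatim, making only the two adjustments dictated by the perturbation of the RHS. First I would recall, directly from the definition of $\delta_k$ in \eqref{distance} together with the triangle inequality and submultiplicativity of the induced norm, the bound
\[
\delta_k \le \sum_{i=1}^k \lvert \hat{g}_i^{(k)} \rvert\, \lVert A \rVert_2\, \lVert E_i \rVert_2\, \lVert \mathbf{x}^i - \mathbf{x}^{i-1} \rVert_2 ,
\]
which is identical to the opening inequality of Theorem~\ref{thm1} and is insensitive to whether the RHS is perturbed. The RHS perturbation enters the argument only through the estimate on the mixing coefficients.

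The key substitution is to replace the Lemma~\ref{lemma1} bound $\lvert \hat{g}_i^{(k)} \rvert \le \sigma_{\min}(\hat{T}_k)^{-1}\lVert \mathbf{r}^k \rVert_2$ used in Theorem~\ref{thm1} by the sharper estimate \eqref{eq:g_bound}, which already absorbs the RHS perturbation via the factor $(1+\epsilon)$ under the assumption $\lVert \delta \mathbf{r}^k \rVert_2 \le \epsilon \lVert \mathbf{r}^k \rVert_2$. Inserting this bound together with the hypothesis \eqref{error_bound2} into the displayed chain, the hypothesis is engineered precisely so that $\sigma_{\min}(\hat{T}_k)$, the factor $(1+\epsilon)$, $\lVert \mathbf{r}^k \rVert_2$, and $\lVert \mathbf{x}^i - \mathbf{x}^{i-1} \rVert_2$ all cancel, reducing each summand to $\epsilon \lVert A \rVert_2 / k$; summing over $i = 1, \dots, k$ then gives $\delta_k \le \epsilon \lVert A \rVert_2$.

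Finally I would close the argument by invoking the definition $\tilde{\varepsilon} = \epsilon \max(\lVert \mathbf{r}^k \rVert_2, \lVert A \rVert_2) \ge \epsilon \lVert A \rVert_2$, which yields $\delta_k \le \epsilon \lVert A \rVert_2 \le \tilde{\varepsilon}$, as claimed. I expect no serious obstacle in the cancellation step, which is purely algebraic; the only part requiring genuine care is the derivation of \eqref{eq:g_bound}, that is, re-running the Lemma~\ref{lemma1} argument on the doubly-perturbed problem \eqref{least_squares_perturbed_matrix_and_rhs}. There one must pass to the orthogonal projection $\mathbf{r}^k_{\text{proj}}$ of $\hat{\mathbf{r}}^k$ onto the column space of $\hat{R}_k$, use $\lVert \mathbf{r}^k_{\text{proj}} \rVert_2 \le \lVert \hat{\mathbf{r}}^k \rVert_2$, and then apply the triangle inequality $\lVert \hat{\mathbf{r}}^k \rVert_2 \le \lVert \mathbf{r}^k \rVert_2 + \lVert \delta \mathbf{r}^k \rVert_2$ followed by $\lVert \delta \mathbf{r}^k \rVert_2 \le \epsilon \lVert \mathbf{r}^k \rVert_2$ to produce the $(1+\epsilon)$ factor. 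The role of the $\max$ in $\tilde{\varepsilon}$ is then simply to guarantee the bound regardless of whether $\lVert A \rVert_2$ or $\lVert \mathbf{r}^k \rVert_2$ is the dominant quantity.
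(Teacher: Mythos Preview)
Your proposal is correct and follows essentially the same route as the paper: bound $\delta_k$ via the triangle-inequality sum, replace the Lemma~\ref{lemma1} estimate by the $(1+\epsilon)$-adjusted bound \eqref{eq:g_bound}, and then let the hypothesis \eqref{error_bound2} cancel all factors to leave $\epsilon\lVert A\rVert_2/k$ per term. The paper even presents the derivation of \eqref{eq:g_bound} in the paragraph immediately preceding the theorem, exactly as you outline in your final paragraph.
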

\begin{proof}
The definition of $\delta_k$ in Equation \eqref{distance} and the upper bound for $| {\hat{g}_i^{(k)}}|$ in \eqref{eq:g_bound} lead to the following inequality 
\begin{equation}
\begin{aligned}
     \delta_k  &    \le  \sum_{i=1}^k \frac{1}{\sigma_{\min}(\hat{T}_k)} (1+\epsilon) \lVert \mathbf{r}^k \rVert_2 { \lVert A \rVert_2} \lVert E_i \rVert_2  \lVert \mathbf{x}^i-\mathbf{x}^{i-1}\rVert_2\:.
\end{aligned}
\end{equation}
Therefore, applying the assumption \eqref{error_bound2} to the right-hand side of above then gives
\begin{equation}
 \delta_k 
 \le \sum_{i=1}^k \frac{1}{k} (1+\epsilon) \frac{\epsilon}{(1+\epsilon)} \lVert A \rVert_2 
{ = \lVert A \rVert_2\epsilon}.
\end{equation}
\end{proof}

Theorem \ref{thm2} states that one can afford performing more and more inaccurate evaluations of the residual throughout successive iterations and still ensure that the norm of the backward error in Full AAR becomes smaller than a prescribed threshold $\tilde{\varepsilon}$. 
In particular, when the calculation of the residual is approximate due to the restriction of the residual onto a subspace, the theorem states that the restriction can be more and more aggressive as the residual norm decreases. 

\begin{remark}
Theorem \ref{thm2} gives a slightly more stringent requirement on $\lVert E_i \rVert_2$ than the one provided in Theorem \ref{thm1} to preserve backward stability when performing AA updates. This is reasonably expected because the hypotheses of Theorem \ref{thm2} assume that the perturbation affects both LHS and RHS.
\end{remark}

The considerations provided in section~\ref{inaccuracy_lhs} that motivate the need for a heuristic quantity to dynamically adjust the inaccuracy of the least-squares solve can be extended to this section as well.

{
In the analysis presented in Section~\ref{inaccuracy_both}, we provide conditions on $E_i$ and $\delta\mathbf{r}^k$ such that the backward error $\delta_k := \| \mathcal{E}_k \hat{\mathbf{g}}^k \|_2$ is below a prescribed threshold $\tilde{\varepsilon}$.
The following corollary bounds the effect on the AAR residual from a perturbed least-squares solve in terms of $\tilde{\varepsilon}$.
}
\begin{corollary}
\label{cor:forward}
    Under the same assumptions as in Theorem~\ref{thm2},
    let ${\overline{\mathbf{x}}}^k$ denote the results of an Anderson mixing according to Equation \eqref{anderson_correction} using the mixing vector $\mathbf{g}^k$ obtained by solving the exact least-squares problem in Equation \eqref{least_squares_matrix_form}. Let $\hat{\overline{\mathbf{x}}}^k$ denote the results of Anderson mixing according to Equation \eqref{anderson_correction} using the mixing vector $\hat{\mathbf{g}}^k$ obtained by solving the perturbed least-squares problem in Equation \eqref{least_squares_perturbed_matrix_and_rhs}. 
    Let ${\mathbf{x}}^{k+1}$ and $\hat{\mathbf{x}}^{k+1}$ be the updated iterates from ${\overline{\mathbf{x}}}^k$ and $\hat{\overline{\mathbf{x}}}^k$ respectively using \eqref{relaxation_step}. Then
    \begin{equation}
        \|\hat{\mathbf{r}}^{k+1} - \mathbf{r}^{k+1}\|_2 \leq {C (\sqrt{2} \kappa \|A\|_2 + \hat{\kappa}\|\mathbf{r}^k\|_2)\epsilon}
    \end{equation}
    where $C$ is a constant that depends only on {$\|I - \omega A \|_2$}, $\kappa:=\frac{\sigma_{\max}(T_k)}{\sigma_{\min}(T_k)}$ is the condition number of $T_k$, and $\hat{\kappa}:=\frac{\sigma_{\max}(T_k)}{\sigma_{\min}(\hat{T}_k)}$.
\end{corollary}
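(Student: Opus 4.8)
The plan is to collapse the residual difference into one linear-algebraic object and then bound it with the two perturbation sources handled separately. First I would push both updated residuals through the Richardson map: since $\mathbf{x}^{k+1}=(I-\omega A)\overline{\mathbf{x}}^k+\omega\mathbf{b}$ and likewise $\hat{\mathbf{x}}^{k+1}=(I-\omega A)\hat{\overline{\mathbf{x}}}^k+\omega\mathbf{b}$, subtracting gives $\hat{\mathbf{r}}^{k+1}-\mathbf{r}^{k+1}=-A(I-\omega A)(\hat{\overline{\mathbf{x}}}^k-\overline{\mathbf{x}}^k)$. Because both mixings in \eqref{anderson_correction} use the same $X_k$, one has $\hat{\overline{\mathbf{x}}}^k-\overline{\mathbf{x}}^k=-X_k(\hat{\mathbf{g}}^k-\mathbf{g}^k)$, and using $R_k=-AX_k$ together with the commutativity of $A$ and $I-\omega A$ yields the clean identity $\hat{\mathbf{r}}^{k+1}-\mathbf{r}^{k+1}=(I-\omega A)R_k(\mathbf{g}^k-\hat{\mathbf{g}}^k)$. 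Taking norms and using $\lVert R_k\rVert_2=\sigma_{\max}(T_k)$ reduces the statement to bounding $\lVert\mathbf{g}^k-\hat{\mathbf{g}}^k\rVert_2$, with the factor $\sigma_{\max}(T_k)$ deliberately set aside so that it can later recombine with the $\sigma_{\min}$ denominators into $\kappa$ and $\hat{\kappa}$. The constant $C$ then collects $\lVert I-\omega A\rVert_2$, which depends only on $\omega$ and $A$.

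Next I would separate the two perturbation mechanisms by inserting the intermediate least-squares solution $\tilde{\mathbf{g}}^k:=\operatorname{argmin}_{\mathbf{g}}\lVert R_k\mathbf{g}-\hat{\mathbf{r}}^k\rVert_2$, which keeps the exact matrix $R_k$ but the perturbed right-hand side, and writing $\mathbf{g}^k-\hat{\mathbf{g}}^k=(\mathbf{g}^k-\tilde{\mathbf{g}}^k)+(\tilde{\mathbf{g}}^k-\hat{\mathbf{g}}^k)$. Using $R_k=Q_kT_k$ and $\mathbf{g}^k=T_k^{-1}Q_k^T\mathbf{r}^k$, the right-hand-side contribution is exactly $\mathbf{g}^k-\tilde{\mathbf{g}}^k=-T_k^{-1}Q_k^T\delta\mathbf{r}^k$, so $\lVert\mathbf{g}^k-\tilde{\mathbf{g}}^k\rVert_2\le\lVert\delta\mathbf{r}^k\rVert_2/\sigma_{\min}(T_k)$; together with $\lVert\delta\mathbf{r}^k\rVert_2\le\epsilon\lVert\mathbf{r}^k\rVert_2\le\tilde{\varepsilon}$, and after multiplication by $\sigma_{\max}(T_k)$, this is the source of the $\kappa$ term. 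For the matrix contribution $\tilde{\mathbf{g}}^k-\hat{\mathbf{g}}^k$, both vectors solve a least-squares problem with the same right-hand side $\hat{\mathbf{r}}^k$ but with $R_k$ versus $\hat{R}_k=R_k+\mathcal{E}_k$; expanding $\hat{R}_k\hat{\mathbf{g}}^k=\hat{Q}_k\hat{Q}_k^T\hat{\mathbf{r}}^k$ and substituting $R_k=\hat{R}_k-\mathcal{E}_k$ isolates the dominant term $\hat{T}_k^{-1}\hat{Q}_k^T\mathcal{E}_k\tilde{\mathbf{g}}^k$, whose norm is controlled by $\lVert\mathcal{E}_k\hat{\mathbf{g}}^k\rVert_2=\delta_k\le\tilde{\varepsilon}$ from Theorem \ref{thm2} and by $\lVert\hat{T}_k^{-1}\rVert_2=1/\sigma_{\min}(\hat{T}_k)$ as in Lemma \ref{lemma1}; after multiplication by $\sigma_{\max}(T_k)$ this yields the $\hat{\kappa}$ term. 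The factor $\sqrt{2}$ in front of $\kappa$ appears to arise when the right-hand-side perturbation $\delta\mathbf{r}^k$ and the matrix backward error $\mathcal{E}_k$ are treated jointly as a single backward perturbation $[\mathcal{E}_k,\delta\mathbf{r}^k]$ whose combined norm is $\tilde{\varepsilon}$, giving $\lVert\delta\mathbf{r}^k-\mathcal{E}_k\tilde{\mathbf{g}}^k\rVert_2\le\sqrt{2}\,\tilde{\varepsilon}$.

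The hard part will be controlling the matrix contribution $\tilde{\mathbf{g}}^k-\hat{\mathbf{g}}^k$ fully rigorously, because the exact perturbation of the pseudoinverse $\hat{R}_k^{\dagger}=\hat{T}_k^{-1}\hat{Q}_k^T$ carries a second term proportional to the least-squares residual $(I-\hat{Q}_k\hat{Q}_k^T)\hat{\mathbf{r}}^k$ and to $1/\sigma_{\min}(\hat{T}_k)^2$. This term is quadratic in the condition number and is not bounded by $\delta_k$ alone, so to reach a bound that is only linear in $\kappa$ and $\hat{\kappa}$ it must be shown to be higher order and absorbed into the leading contributions; equivalently, the estimate is carried out to first order in the perturbation, consistent with the backward-stability viewpoint of Sections \ref{inaccuracy_lhs} and \ref{inaccuracy_both}. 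A secondary technical point is that Theorem \ref{thm2} controls $\lVert\mathcal{E}_k\hat{\mathbf{g}}^k\rVert_2$ rather than $\lVert\mathcal{E}_k\tilde{\mathbf{g}}^k\rVert_2$, and replacing $\tilde{\mathbf{g}}^k$ by $\hat{\mathbf{g}}^k$ introduces a discrepancy of the same order as $\tilde{\mathbf{g}}^k-\hat{\mathbf{g}}^k$ itself, which again is handled at the first-order level. Collecting $\lVert I-\omega A\rVert_2$, the numerical constants, and the factor $\sigma_{\max}(T_k)$ against the two $\sigma_{\min}$ denominators then delivers $\lVert\hat{\mathbf{r}}^{k+1}-\mathbf{r}^{k+1}\rVert_2\le C(\sqrt{2}\kappa+\hat{\kappa})\tilde{\varepsilon}$.
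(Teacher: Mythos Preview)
Your reduction to $\hat{\mathbf{r}}^{k+1}-\mathbf{r}^{k+1}=(I-\omega A)R_k(\mathbf{g}^k-\hat{\mathbf{g}}^k)$, with $C=\lVert I-\omega A\rVert_2$, is exactly how the paper begins. The divergence starts at the next step, and there is a genuine gap.

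The paper does \emph{not} insert the intermediate $\tilde{\mathbf{g}}^k$ you propose. It writes $\hat{\mathbf{g}}^k-\mathbf{g}^k=\hat{R}_k^{\dagger}\hat{\mathbf{r}}^k-R_k^{\dagger}\mathbf{r}^k=(\hat{R}_k^{\dagger}-R_k^{\dagger})\mathbf{r}^k+\hat{R}_k^{\dagger}\delta\mathbf{r}^k$ and then bounds $\lVert R_k(\hat{\mathbf{g}}^k-\mathbf{g}^k)\rVert_2$ by $\lVert R_k\rVert_2\lVert \hat{R}_k^{\dagger}-R_k^{\dagger}\rVert_2\lVert\mathbf{r}^k\rVert_2+\lVert R_k\rVert_2\lVert\hat{R}_k^{\dagger}\rVert_2\lVert\delta\mathbf{r}^k\rVert_2$. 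The second term is immediately $\hat{\kappa}\lVert\delta\mathbf{r}^k\rVert_2\le\hat{\kappa}\tilde{\varepsilon}$; note that the $\hat{\kappa}$ in the statement comes from the \emph{right-hand-side} perturbation, not from the matrix perturbation as you have it. For the first term the paper invokes Wedin's pseudoinverse perturbation theorem \cite[Theorem~4.1]{wedin1973perturbation}, which for a rank-preserving perturbation (Assumption~\ref{FullrankAssumption}) gives the \emph{rigorous} inequality $\lVert\hat{R}_k^{\dagger}-R_k^{\dagger}\rVert_2\le\sqrt{2}\,\lVert R_k^{\dagger}\rVert_2\lVert\hat{R}_k^{\dagger}\rVert_2\lVert\mathcal{E}_k\rVert_2$. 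Multiplying by $\lVert R_k\rVert_2=\sigma_{\max}(T_k)$ produces the $\sqrt{2}\kappa$ factor, and the assumed bound on $\lVert E_i\rVert_2$ in Theorem~\ref{thm2} then controls $\lVert\mathcal{E}_k\rVert_2\lVert\mathbf{r}^k\rVert_2/\sigma_{\min}(\hat{T}_k)$ by $\lVert A\rVert_2\,\epsilon/(1+\epsilon)\le\tilde{\varepsilon}$.

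This is precisely the step you identify as ``the hard part'' and then try to finesse by calling the awkward pseudoinverse remainder ``higher order'' to be absorbed. That reasoning is not valid: the corollary is an exact inequality, not a first-order expansion, and the term you want to discard is in general comparable to the leading one. Wedin's theorem is the missing ingredient that dispatches the whole pseudoinverse difference in one stroke, and it is also the true origin of the $\sqrt{2}$; your explanation via a joint backward perturbation $[\mathcal{E}_k,\delta\mathbf{r}^k]$ is incorrect. Finally, observe that with your $\tilde{\mathbf{g}}^k$ split the right-hand-side piece goes through $R_k^{\dagger}$ rather than $\hat{R}_k^{\dagger}$, so even if you repaired the matrix piece with Wedin you would land on a bound of the form $(1+\sqrt{2})\kappa\,\tilde{\varepsilon}$ and never see the $\hat{\kappa}$ that appears in the statement.
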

\begin{proof}
    {
From the definition $\mathbf{r} = \mathbf{b} - A \mathbf{x}$ and \eqref{relaxation_step}, we have
\begin{equation}
    \|\hat{\mathbf{r}}^{k+1} - \mathbf{r}^{k+1}\|_2 
    = \| A (\hat{\mathbf{x}}^{k+1} - \mathbf{x}^{k+1})\|_2 
    = \| (A - \omega A^2)(\hat{\overline{\mathbf{x}}}^{k} - \overline{\mathbf{x}}^{k})\|_2 \:.
\end{equation}
It then follows from \eqref{anderson_correction} that
\begin{equation}
    \| (A - \omega A^2)(\hat{\overline{\mathbf{x}}}^{k} - \overline{\mathbf{x}}^{k})\|_2 
    =  \|(I - \omega A) R_k (\hat{\mathbf{g}}^{k} - \mathbf{g}^{k})\|_2\leq \|I - \omega A \|_2 \|R_k (\hat{\mathbf{g}}^{k} - \mathbf{g}^{k})\|_2.
\end{equation}
Since $\omega$ and $A$ are given, $\|I - \omega A \|_2\leq C$ for some $C>0$. Let ${{R}}_k^\dagger$ and $\hat{{R}}_k^\dagger$ denote the pseudo inverses of ${R}_k$ and $\hat{{R}}_k$, respectively. Then, the definitions of $\hat{\mathbf{g}}^{k}$, $\mathbf{g}^{k}$, and $\hat{\mathbf{r}}^{k}$ lead to
\begin{equation}
    \|R_k (\hat{\mathbf{g}}^{k} - \mathbf{g}^{k})\|_2 
=\|R_k (\hat{R}_k^\dagger \hat{\mathbf{r}}^k - {R}_k^\dagger {\mathbf{r}}^k)\|_2 
=\|R_k (\hat{R}_k^\dagger - {R}_k^\dagger) {\mathbf{r}}^k + R_k \hat{R}_k^\dagger \delta{\mathbf{r}}^k\|_2. 
\end{equation} 
We then obtain the desired bound via
\begin{equation}
\begin{alignedat}{2}
    \|\hat{\mathbf{r}}^{k+1} - \mathbf{r}^{k+1}\|_2 &\leq C \|R_k (\hat{\mathbf{g}}^{k} - \mathbf{g}^{k})\|_2 \\
&\leq C \big(\|R_k\|_2
\| (\hat{R}_k^\dagger - {R}_k^\dagger)\|_2
\|{\mathbf{r}}^k \|_2 + \|R_k\|_2 \|\hat{R}_k^\dagger \|_2 \|\delta{\mathbf{r}}^k\|_2\big)\\
&\leq C \big( \sqrt{2} \kappa (\sigma_{\min}(\hat{T}_k))^{-1}\|\mathcal{E}_k\|_2 \|\mathbf{r}^k\|_2 + \hat{\kappa}\|\delta\mathbf{r}^k\|_2 \big) \\
&\leq C \big( \sqrt{2} \kappa \|A\|_2 \frac{\epsilon}{1+\epsilon}  + \hat{\kappa} \epsilon \|\mathbf{r}^k\|_2 \big) 
\leq {C (\sqrt{2} \kappa \|A\|_2 + \hat{\kappa}\|\mathbf{r}^k\|_2){\epsilon}},
\end{alignedat}
\end{equation}
where the third inequality uses the perturbation analysis of pseudo inverses \cite[Theorem~4.1]{wedin1973perturbation} and the relation between the induced 2-norms and singular values of matrices, and the fourth inequality follows from the bounds on $\|E_i\|_2$ and $\|\delta\mathbf{r}^k\|_2$ in Theorem~\ref{thm2}.
}
\end{proof}

\section{Reduced Alternating AA}
\label{section:reduced_aar}
Now we are ready to use the guidelines motivated from the theoretical results to propose a new variant of AA which reduces the computational effort by judiciously projecting the least-squares problem onto a subspace. We call this method \textit{Reduced Alternating AA}.
At iteration $k=z p$, Reduced Alternating AA computes approximate mixing coefficients $\hat{\mathbf{g}}^k$ by solving
\begin{equation}
\hat{\mathbf{g}}^k = \underset{\mathbf{g}\in \mathbb{R}^{m}}{\operatorname{argmin}}\lVert 
P_k (R_k\mathbf{g} - \mathbf{r}^k)\rVert^2_{2} \:,   
\label{eq:projected_LS}
\end{equation}
where $P_k\in\mathbb{R}^{n\times n}$ of rank $s<n$ is a projection operator onto an $s$-dimensional Euclidean subspace $\mathcal{S}_k\subset \mathbb{R}^n$.
To keep the computation cost of the projection low, we focus on the case in which the projection is restricted to a row selection procedure, i.e., 
\begin{equation}
    P_k = S_k S_k^T\quad\text{with}\quad S_k =[\mathbf{e}_{j_1}, \ldots, \mathbf{e}_{j_s}]\in \mathbb{R}^{n\times s}.
\end{equation}
Here $\mathbf{e}_{j}$ denotes the $j$-th canonical basis, and $S_k$ is the restriction operator that selects only the $j_i$-th rows, $i=1,\dots,s$.

In this paper, we propose the following two strategies to select the index set $\{j_i\}_{i=1}^s$.
\begin{itemize}
    \item {\it Subselected Alternating AA} -- the index set $\{j_i\}_{i=1}^s$ is selected to be the $s$ indices corresponding to the $s$ entries in the residual vector $\mathbf{r}^k$ with largest magnitudes.
    \item {\it Randomized Alternating AA} -- the index set $\{j_i\}_{i=1}^s$ is drawn from the full index set $\{1,\dots,n\}$ under a uniform distribution without replacement. 
\end{itemize}
With the same size $s$, the two strategies lead to similar reduction in terms of the computation time for solving \eqref{eq:projected_LS}, with a minor difference in the computation overhead in the index selection process. 
The dimensionality $s$ of the projection subspace $\mathcal{S}_k$ is adaptively tuned at each according to values of the heuristic that estimates the error bound in Equation \eqref{error_bound2}.

\subsection{Computational cost of Reduced Alternating AA for linear fixed-point iterations}
The computational cost to solve the least-squares reduces from $\mathcal{O}(nm^2)$ to $\mathcal{O}( s m^2)$.
Therefore, the computational complexity of Reduced AAR is less than the computational cost of AAR
\begin{equation}
    \mathcal{O}\bigg( {\texttt{nnz}(A)\,} n + \frac{1}{p} s m^2\bigg) \le \mathcal{O}\bigg( {\texttt{nnz}(A)\,} n + \frac{1}{p} n m^2\bigg).
\end{equation}
While the dimension $s$ of the subspace $\mathcal{S}_k$ is tuned according to the heuristic proposed in Equation \eqref{error_bound2}, the value of $p$ still needs to be empirically and judiciously tuned. 

\section{Numerical results}
\label{section:numerical_results}

In this section we present numerical results {to illustrate that AA with approximate calculations converges when the accuracy of approximate calculations is guided by the proposed heuristics. We applied the proposed solvers to example fixed-point problems, including a variety of linear systems considered in Section~\ref{subsec:linear} and a non-linear fixed-point problem from a simplified Boltzmann equation detailed in Section~\ref{subsec:nonlinear}.
We note that, even though the analysis in Section~\ref{section:error_bounds} only applies to the linear case, the variants of AA with heuristic-guided approximate calculations converge for both linear and non-linear fixed-point problems considered in this section. 
}

\subsection{Linear deterministic fixed-point iteration: iterative linear solver}
\label{subsec:linear}
The numerical results used to illustrate AA on linear deterministic fixed-point iterations to solve linear systems are split into two parts. First, we study how the convergence of AR is affected by random perturbation with tunable intensity applied to the LHS of the least-squares solved to compute the Anderson mixing. Then, we show how restricting the least-squares onto a projection subspace reduces the computational time to solve a linear system while still converging.    

\subsubsection{Injection of random noise on LHS of least-squares for each AA step}
We consider the diagonal matrix $A = \text{diag}(10^{-4}, 2, 3, \ldots, 100)$. {The exact solution of the linear system is chosen as a random vector where all the entries follow a uniform distribution in the range $[0,1]$, and the RHS is obtained as a result of the matrix-vector multiplication between the matrix $A$ and the solution vector.}
We add random noise {to} the LHS of the least-squares to compute the Anderson mixing:
\begin{equation}
    \hat{\mathbf{g}}^k=\underset{\mathbf{g}\in \mathbb{R}^{k}}{\operatorname{argmin}}{\bigg \lVert \bigg (R_k + \epsilon_k \lVert R_k\rVert_2 \hat{\mathcal{E}}_k \bigg)\mathbf{g} - \mathbf{r}^k \bigg \rVert_2^2}.
\end{equation}
The perturbation matrices $ \hat{\mathcal{E}}_k$'s, with $\lVert \hat{\mathcal{E}}_k\rVert_2=1$ for $k>0$ are random $100 \times 100$ matrices generated with normally distributed values using the \texttt{Matlab} function \textbf{randn}. The quantity $\epsilon_k$ is defined as 
\begin{equation}
    \epsilon_k =  \frac{\epsilon}{k^*} \frac{\sigma_{\min} (T_k)}{\lVert \mathbf{r}^k \rVert_2 \lVert \mathbf{x}^k - \mathbf{x}^{k-1} \rVert_2 },
    \label{test_error}
\end{equation}
where $T_k$ is the triangular factor of the QR factorization of $R_k$. 
The quantity $\epsilon$ is used to tune the magnitude of the noise $\epsilon_k$. We set $k^* = 100$ according to the size of the linear system we are solving. We use four different values of $\epsilon$: $\epsilon = 1e-8$, $\epsilon = 1e-6$, $\epsilon = 1e-4$. $\epsilon = 1.0$. 
Numerical results in Figure~\ref{random_noise} show $\epsilon_k$ as a function of the iteration count. The residual norm history of AR for both the unperturbed least-squares and the perturbed least-squares is shown as well to illustrate how the random perturbation affects the convergence of AR for each value of $\epsilon$ tested. As expected, increasing values of $\epsilon$ lead to a progressive deterioration of the convergence rate, but final convergence within the prescribed tolerance of the residual is still preserved. 

\begin{figure}
  \centering
  \subfloat[]{\label{profile_1e8}\includegraphics[width=0.5\textwidth]{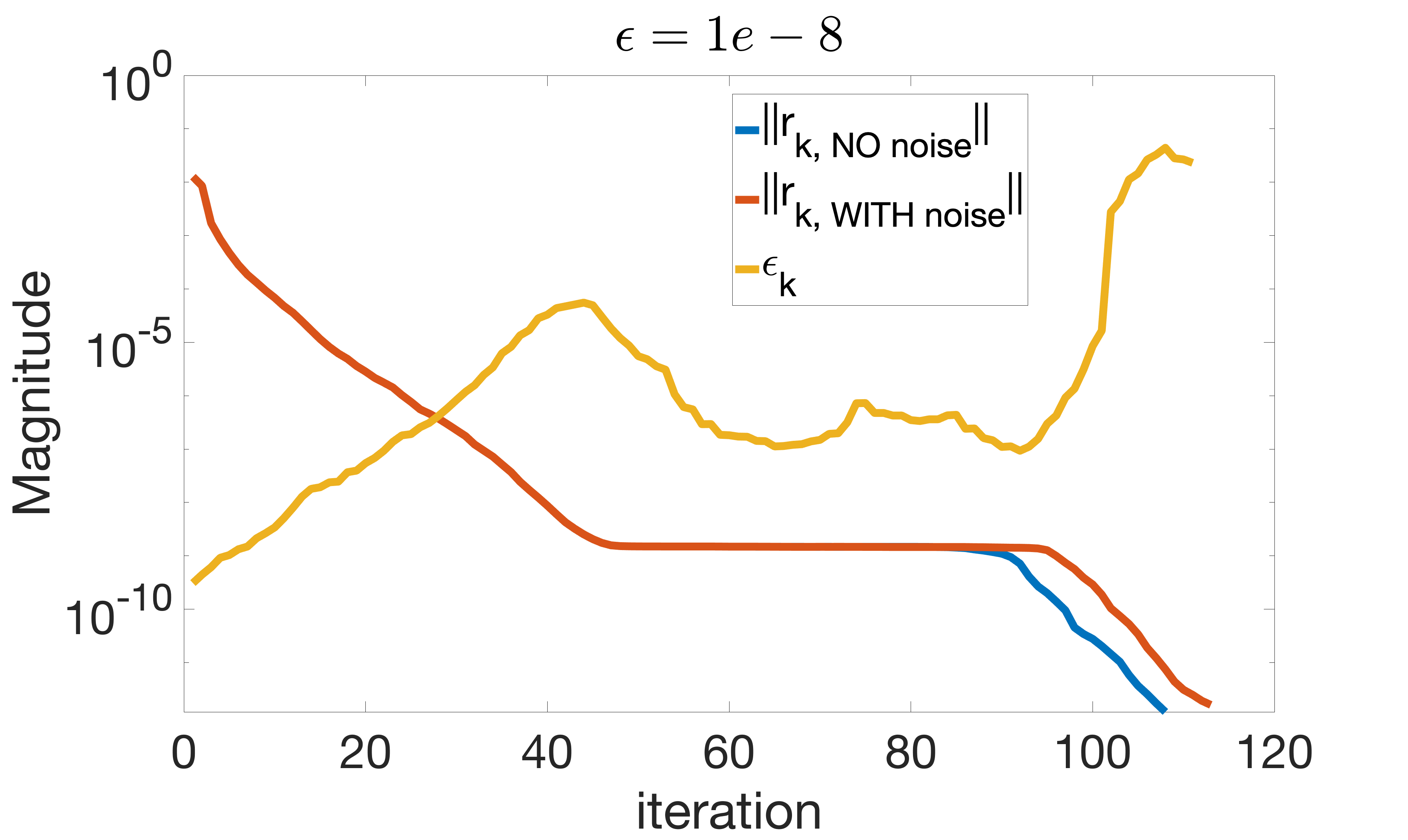}}
  \subfloat[]{\label{profile_1e6}\includegraphics[width=0.5\textwidth]{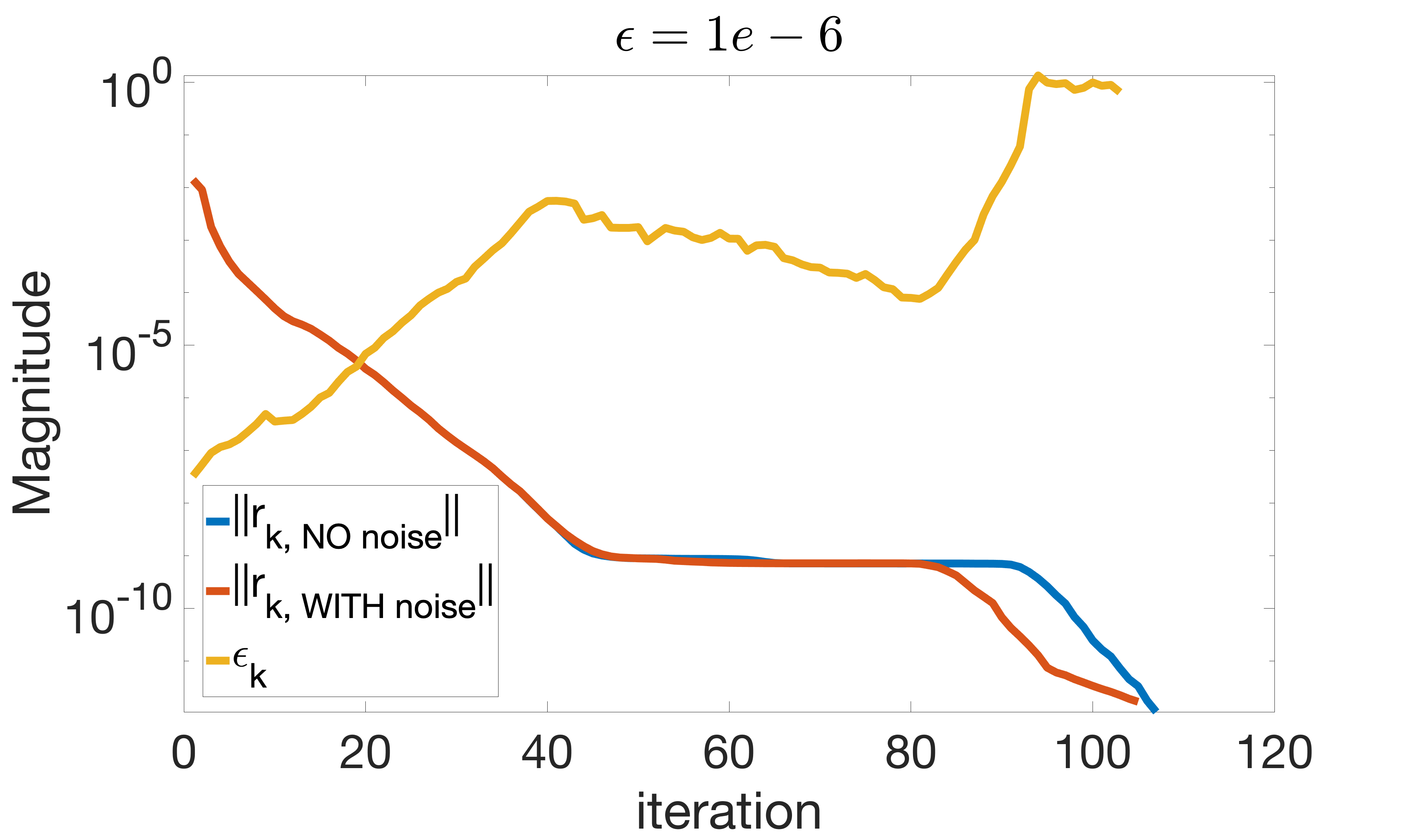}} 
  \newline
  \subfloat[]{\label{profile_1e4}\includegraphics[width=0.5\textwidth]{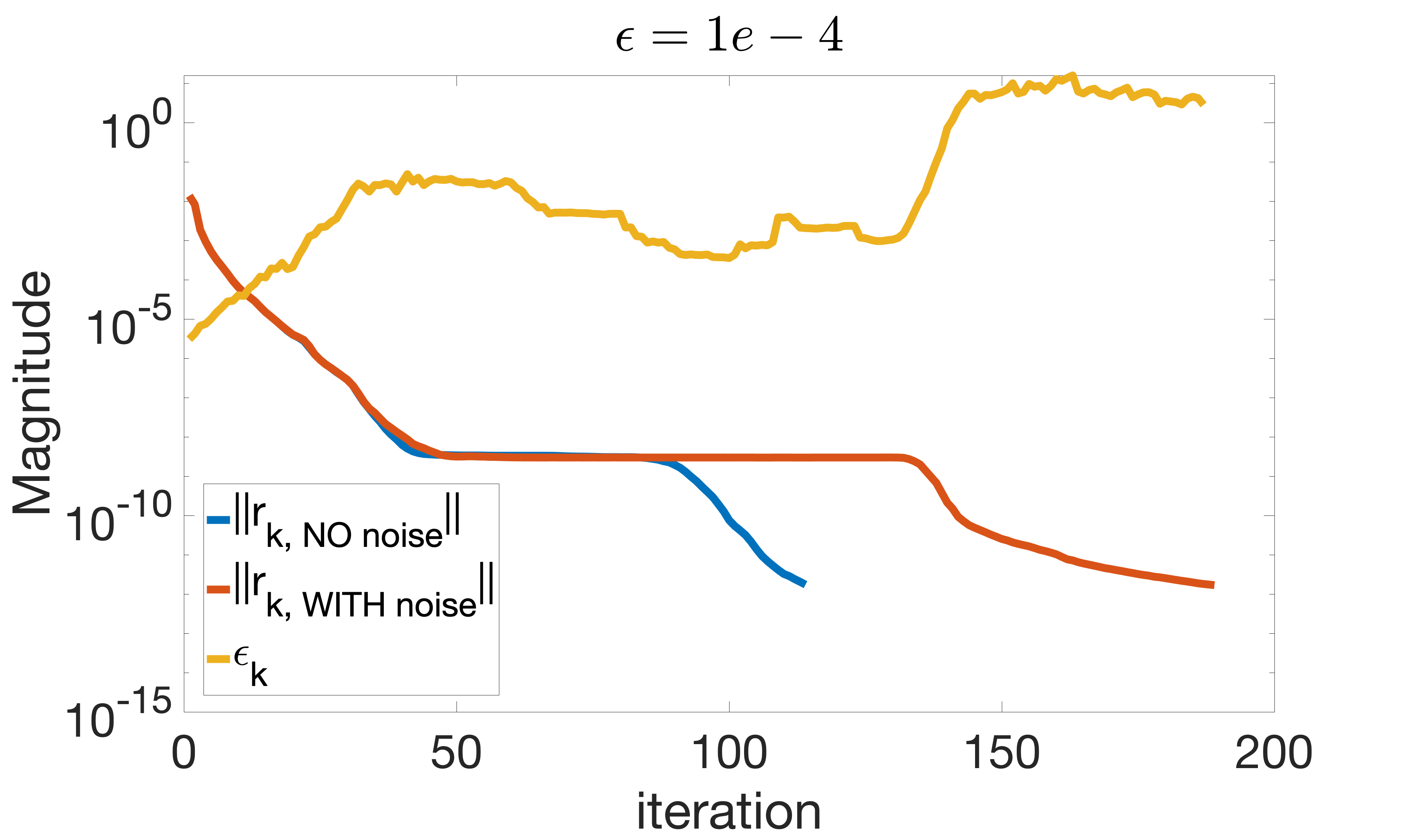}}
  \subfloat[]{\label{profile_1e0}\includegraphics[width=0.5\textwidth]{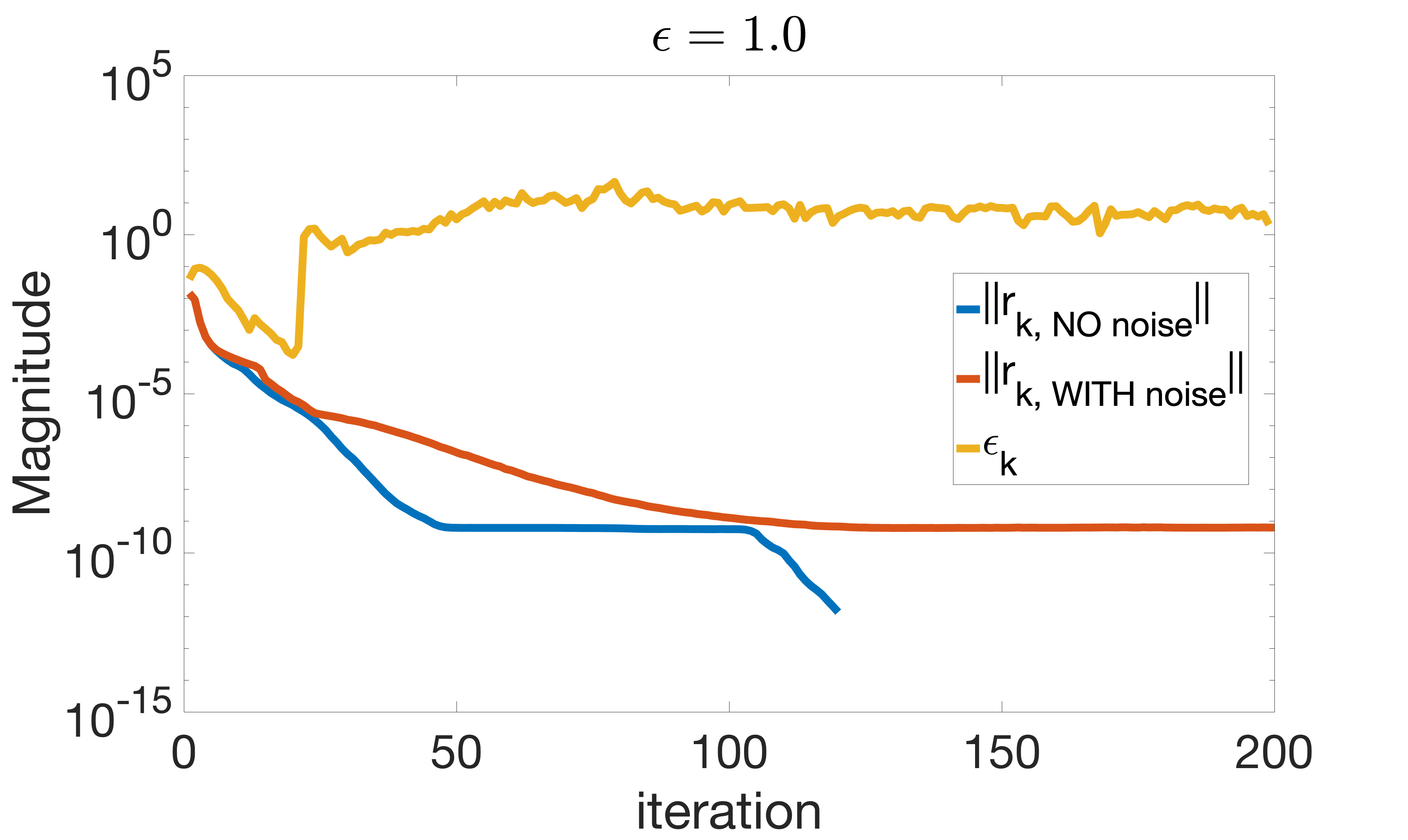}}  
  \caption{\label{random_noise}Approximate AA convergence history.}
\end{figure}

\subsubsection{Reduced Alternating AA to solve sparse linear systems}

We now use Reduced Alternating AA to solve a set of linear systems where the matrices are open source and pulled from the SuiteSparse Matrix Collection (formerly known as the University of Florida Sparse Matrix Collection) \cite{UFL}, the Matrix Market Collection \cite{MM}. 
In Table \ref{matrices}, we report the matrices and their most significant properties. The sources used to retrieve the matrices are specified in Table \ref{matrices} as well. The notation MM is used to refer to the Matrix Market Collection, and SS for the SuiteSparse Matrix Collection. 

\begin{table}
\center
\begin{tabular}{|c|c|c|c|c|c|}
    \hline
        \textbf{Matrix} & \textbf{Type} & \textbf{Size} & 
         \textbf{Structure} & \textbf{Pos. def.} & \textbf{Source}\\
    \hline  
    fidap029 & real & 2,870 & nonsymmetric & yes & MM\\
    \hline
    raefsky5 & real & 6,316 & nonsymmetric & yes & SS\\
    \hline
    bcsstk29* & real & 13,992 & symmetric & no & SS\\    
    \hline
    sherman3* & real & 5,005 & nonsymmetric & no & MM \\
    \hline
    sherman5* & real & 3,312 & nonsymmetric & no & MM\\
    \hline
    chipcool0 & real & 20,082 & nonsymmetric & no & SS\\
    \hline
    e20r0000* & real & 4,241 & nonsymmetric & no & MM \\
    \hline
    spmsrtls & real & 29,995 & nonsymmetric &no & SS \\
    \hline
    garon1 & real & 3,175 & nonsymmetric & no & SS \\
    \hline
    garon2 & real & 13,535& nonsymmetric & no & SS\\
    \hline
    memplus & real & 17,758 & nonsymmetric & no & SS\\
    \hline
    saylr4 & real & 3,564 & nonsymmetric & no & MM \\     
    \hline
    xenon1* & real & 48,600 & nonsymmetric & no & SS \\
    \hline
    xenon2* & real & 157,464 & nonsymmetric & no & SS\\
    \hline
    venkat01 & real & 62,424 & nonsymmetric & no & SS \\
    \hline
    QC2534 & complex & 2,534 & non-Hermitian & no & SS \\
    \hline
    mplate* & complex & 5,962 & non-Hermitian & no & SS \\
    \hline
    light\_in\_tissue & complex & 29,282 & non-Hermitian & no & SS \\
    \hline
    kim1 & complex & 38,415 & non-Hermitian & no & SS \\
    \hline
    chevron2 & complex & 90,249 & non-Hermitian & no & SS \\   
    \hline    
\end{tabular}
\caption{List of matrices used for numerical experiments performed in \sc{Matlab}.}
\label{matrices}
\end{table}

The experiments compare the performance of GMRES with restart parameter equal to 50, Alternating AA, Subselected Alternating AA and Randomized Alternating AA {in solving linear systems with the matrices described in Table \ref{matrices}. The exact solution of each linear system is chosen as a random vector where all the entries follow a uniform distribution in the range $[0,1]$, and the RHS is obtained as a result of the matrix-vector multiplication between the matrix and the associated solution vector}. For each problem, two preconditioning techniques are used: Zero Fill-In Incomplete LU Factorization (ILU(0) for short) and Incomplete LU Factorization with Threshold (ILUT($\tau$) for short) \cite{Chan1997}.
When ILUT($\tau$) is used, the tolerance $\tau$ controls the level of sparsity in the incomplete LU factors. We set $\tau=10^{-4}$ and any zeros on the diagonal of the upper triangular factor are replaced by the local tolerance. The ILUT($\tau$) preconditioner is computed with a column pivoting and the pivot is chosen as the maximum magnitude entry in the column.
For more details about ILU(0) and ILUT($\tau$) we refer to literature \cite{Benzi}, \cite[pp.~287--307]{Saad}. 
A symmetric reverse Cuthill-McKee reordering \cite{rcm} has been applied to some matrices to allow a stable construction of the ILU factors.
The matrices that needed a reordering for the ILU preconditioner are marked with an asterisk close to their name in Table \ref{matrices}. 
Since the matrices \texttt{xenon1} and \texttt{xenon2} were poorly scaled, the ILU factors have been computed for these matrices
only after a diagonal scaling. The diagonal scaling has been applied via a diagonal preconditioner on these matrices. Those situations where the matrices had zeros on the main diagonal were treated by setting to one the associated entries in the diagonal preconditioner.
The Richardson relaxation parameter is set to $\omega = 0.2$.  The number of Richardson iterations between two Anderson updates for the Alternating AA is $p=3$. 
{Two different settings for the history of AA are considered: in one case the full history is used without truncation ($m=k$) and in the other case the history is truncated to maintain fixed length equal to $m=20$.
The former setting is consistent to the no-truncation assumption we made in the theoretical analysis in Section~\ref{section:error_bounds}, whereas the latter setting is often used in practice for its preferable performance.} 
The least-squares problems related to Anderson mixing steps
are solved via QR factorization with column pivoting of the rectangular matrix to the LHS. 
The RHS of the linear systems is obtained by multiplying the solution vector by the coefficient matrix $A$. The variation across single runs never exceeded $10\%$ in terms of computational time. 
A threshold of $10^{-8}$ is used as a stopping criterion on the relative residual $\ell^2$-norm.
The heuristic proposed in Corollary \ref{heuristic} is used to automatically tune the dimensionality of the projection subspace at each AA step. To avoid introducing excessive computational overhead in fine tuning the dimensionality of the subspace, we add rows in batches, with each batch being 10\% of the total number of rows in the original least-squares problem. 

The performance of the linear solvers is evaluated through \textit{performance profiles} \cite{Dolan}. 
Let us refer to $\mathcal{S}$ as the set of solvers and $\mathcal{P}$ as the test set. We assume that we have $n_s$ solvers and $n_p$ problems. In this paper, performance profiles are used to compare the computational times. To this end we introduce
\[
t_{p,s}=\text{computing time to solve problem}\; p \; \text{with solver}\; s.
\]
The comparison between the times taken by each solver is based on the performance ratio defined as
\[
r_{p,s} = \frac{t_{p,s}}{\min\{t_{p,s}:s\in\mathcal{S}\}}.
\]
The performance ratio allows one to compare the performance of solver $s$ on problem $p$ with the best performance by any solver to address the same problem $p$. In case a specific solver $s$ does not succeed in solving problem $p$, then a convention is adopted to set $r_{p,s}=r_{M}$ where $r_M$ is a maximal value. In our case we set $r_m=10,000$. 
The performance of one solver compared to the others' on the whole benchmark set is displayed by the cumulative distribution function $\rho_s(\tau)$ that is defined as follows:
\[
\rho_s(\tau)=\frac{1}{n_p}\text{size}\{p\in\mathcal{P}:r_{p,s}\le \tau \}.
\]
The value $\rho_s(\tau)$ represents the probability that solvers $s\in\mathcal{S}$ has a performance ratio $r_{p,s}$ less than or equal to the best possible ratio up to a scaling factor $\tau$. 
The convention adopted that prescribes $r_{p,s}=r_M$ if solver $s$ does not solve problem $p$ leads to the reasonable assumption that
\[
r_{p,s}\in[1,r_M].
\]
Therefore, $\rho_s(r_M)=1$ and 
\[
\rho_s^* = \lim_{\tau\rightarrow r_M^-}\rho_s(\tau)
\]
represents the probability that solver $s$ succeed in solving a generic problem from set $\mathcal{P}$. 
To improve the clarity of the graphs, we replace $\rho_s(\tau)$ with
\begin{equation}\label{eq:performance_metric}
\tau \mapsto \frac{1}{n_p}\text{size}\{p\in\mathcal{P}:\log_2(r_{p,s})\le \tau\}    
\end{equation}
and we use this quantity for each performance profile displayed.\newline

{The numerical results with ILU(0) and ILUT($\tau$) when AA uses full history ($m=k$) are shown in Figures \ref{profile_ilu0} and \ref{profile_ilut}, respectively. Randomized Alternating AA$(m=k)$ outperforms standard Alternating AA$(m=k)$ and Subselected Alternating AA$(m=k)$ for time-to-solution, showing that a randomized selection of the rows for the least-squares problem results in improved speed of the AA solver without compromising convergence to the desired accuracy. However, all variants of AA take longer to converge than Restarted GMRES for two main reasons. First, the matrices $R_k$ associated with the least-squares problems solved in AA are unstructured and dense, whereas the matrices associated with the least-squares problems solved in GMRES are upper Hessenberg matrices, which reduces the computational cost to solve the least-squares problems. Secondly, the computational cost to solve the least-squares problems for all variants of AA increases when $m=k$ due to the increase of columns in the matrix  $R_k$ at each iteration $k$.
We next explore the numerical performance of variants of AA when the history size $m$ is restricted to $m=20$.}
The numerical results with ILU(0) and ILUT($\tau$) {when the AA history is truncated to $m=20$} are shown in Figures \ref{profile_ilu0_m20} and \ref{profile_ilut_m20}, respectively. Randomized Alternating AA{$(m=20)$} outperforms Subselected Alternating AA{$(m=20)$} by converging on a larger set of problems, and outperforms standard Alternating AA{$(m=20)$} and GMRES for time-to-solution. Therefore, an adaptive reduction of the dimensionality of the least-squares problem is shown to effectively reduce the total computational cost of Alternating AA to iteratively solve a number of different sparse linear systems while still maintaining convergence.

\begin{figure}
  \centering
  \subfloat[]{\label{profile_ilu0}\includegraphics[width=0.5\textwidth]{./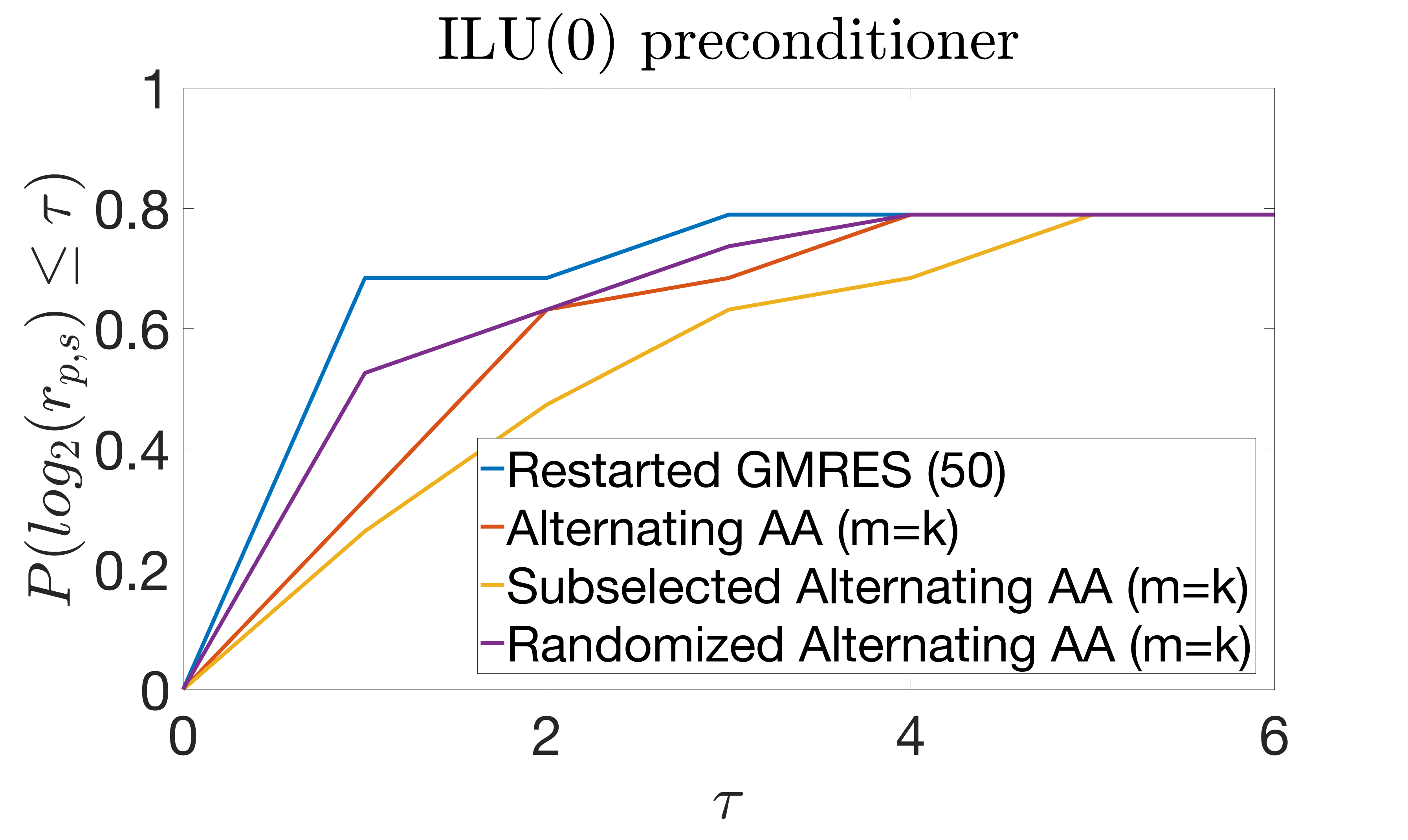}}
  \subfloat[]{\label{profile_ilut}\includegraphics[width=0.5\textwidth]{./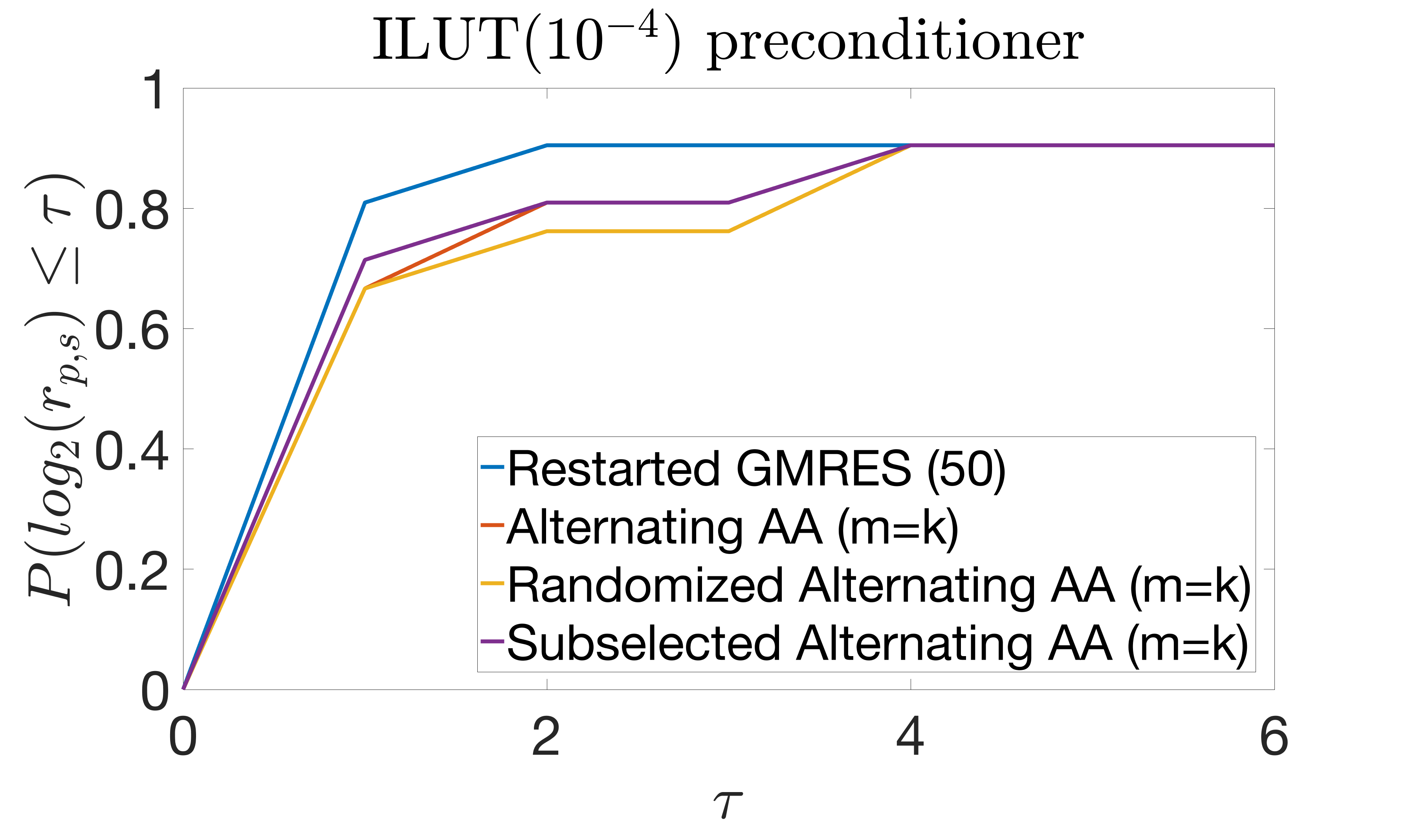}}  
  \caption{\label{profiles_m=k}{Performance comparison between iterative solvers on sparse linear systems. Here the performance metric $P$ given in \eqref{eq:performance_metric} is reported for each solver $s$ with varying $\tau$. All AA variants use the full history ($m=k$)}.}
\end{figure}

\begin{figure}
  \centering
  \subfloat[]{\label{profile_ilu0_m20}\includegraphics[width=0.5\textwidth]{./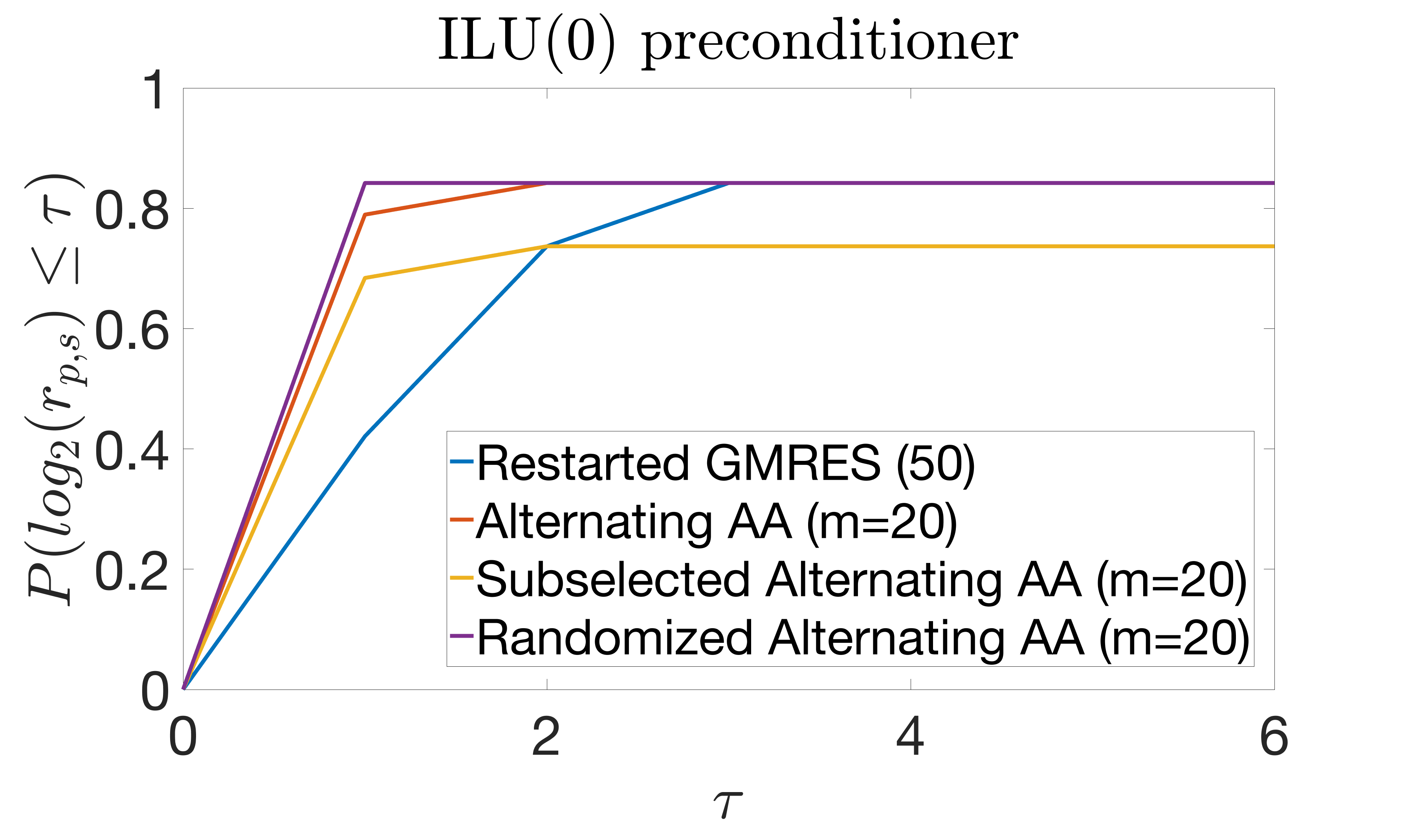}}
  \subfloat[]{\label{profile_ilut_m20}\includegraphics[width=0.5\textwidth]{./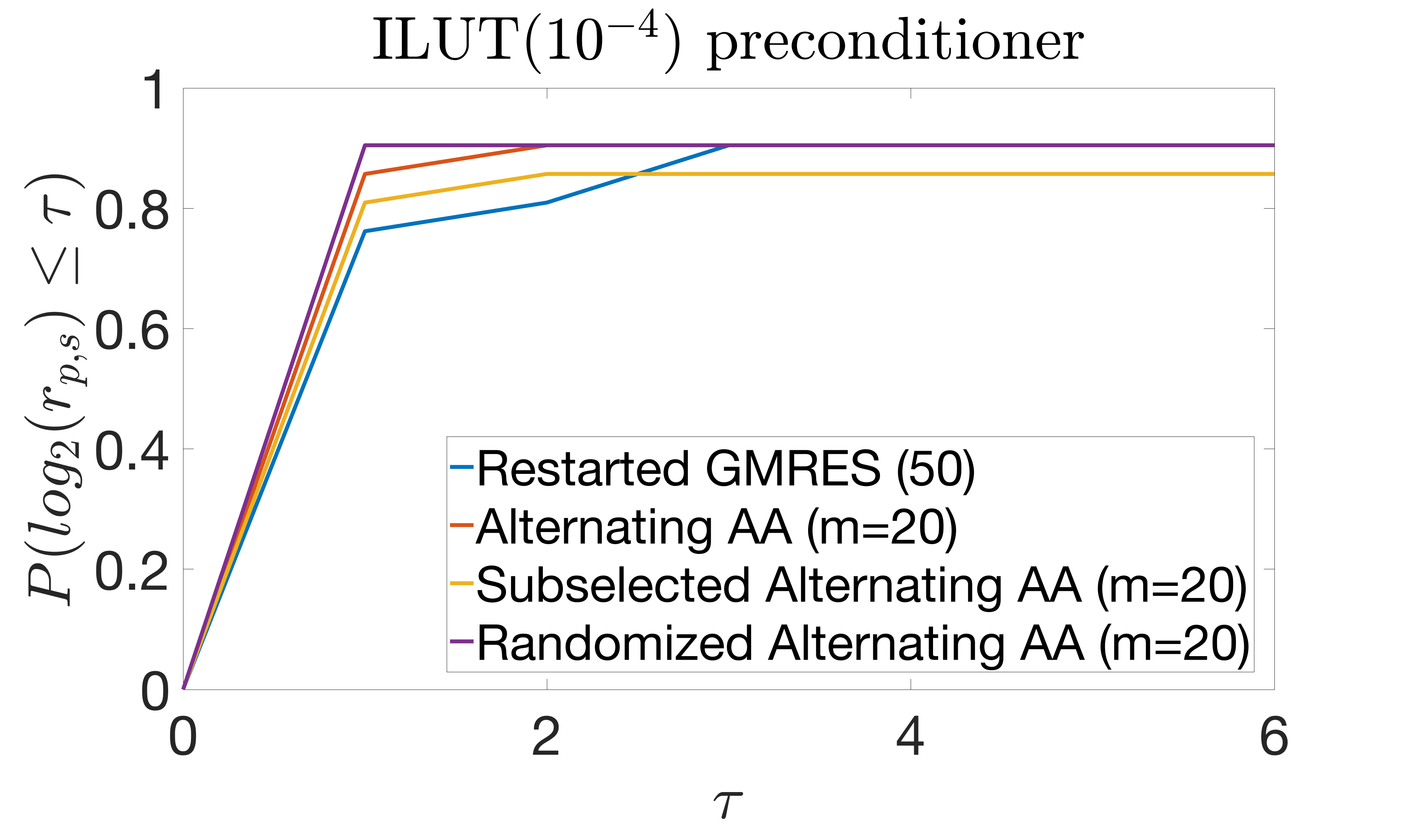}}  
  \caption{\label{profiles}{Performance comparison between iterative solvers on sparse linear systems. Here the performance metric $P$ given in \eqref{eq:performance_metric} is reported for each solver $s$ with varying $\tau$. All AA variants use a history truncated to $m=20$}.}
\end{figure}

{
In Tables \ref{ls_vs_richardson_cost} and \ref{ls_vs_richardson_cost_m20}, we report the ratio between the total wall-clock time for solving the least-squares problems and the total wall-clock time for applying Richardson's steps in Alternating AA, Subselected Alternating AA, and Randomized Alternating AA, using ILU(0) and ILUT($\tau$) preconditioning methods for settings $m=k$ and $m=20$, respectively. 
We notice that, in Table~\ref{ls_vs_richardson_cost}, the computational cost to solve the least-squares problems is considerably higher than the cost to perform a Richardson's steps across all numerical tests, whereas 
the ratio reported in Table~\ref{ls_vs_richardson_cost_m20} is significantly lower, because the matrices $R_k$ there are tall and skinny and have a fixed number of columns across iterations. However, even in truncated history case in Table~\ref{ls_vs_richardson_cost_m20},
when using Alternating AA with the ILU(0) preconditioner}, solving the least-squares problems still takes longer than performing the Richardson's steps for all matrices except for the matrices bcsstk29 and QC2534. These results validate the claim that the computational time to solve the least-squares problems is generally higher than the computational time to perform Richardson's steps in Alternating AA. 
When the ILUT($\tau$) preconditioner is used, the discrepancy between the computational time to solve the least-squares problems and the computational time to perform the Richardson's steps reduces because the factors of the ILUT($\tau$) preconditioner have more non-zero entries than the ILU(0) preconditioner, thus making the preconditioner more computationally expensive to be applied at each Richardson's step. 
Both Subselected Alternating AA and Randomized Alternating AA effectively reduce the computational time to solve the least-squares problem by projecting the least-squares problem in a reduced space. The reduction of dimensionality of the least-squares problem precludes Subselected Alternating AA from reaching convergence for some matrices on which standard Alternating AA converged. On the contrary, Randomized Alternating AA succeeds in reducing the computational time to solve the least-squares problems without affecting the convergence.

\begin{table}

\center
{
\begin{tabular}{|c||p{1.8cm}|p{1.8cm}||p{1.8cm}|p{1.8cm}||p{1.8cm}|p{1.8cm}|}
    \hline
         &  \multicolumn{6}{|c|}{\Large $\frac{\textbf{Total wall-clock time for least-squares}}{\textbf{Total wall-clock time for Richardson's steps}}$}\\
    \hline
    & \multicolumn{2}{|c|}{\bf Alternating AA{$(m=k)$}} & \multicolumn{2}{|c|}{\bf Subselect. Altern. AA{$(m=k)$}} & \multicolumn{2}{|c|}{\bf Random. Altern. AA{$(m=k)$}}\\
    \hline
        \textbf{Matrix} & \textbf{ILU(0)} & \textbf{ILUT($\tau$)} & \textbf{ILU(0)} & \textbf{ILUT($\tau$)} & \textbf{ILU(0)} & \textbf{ILUT($\tau$)} \\       
    \hline  
    fidap029 & 13.71 & 39.85 & 4.09 & 12.01 & 3.72 & 3.95\\
    \hline
    raefsky5 & 23.44 & 33.14 & 6.18 & 9.21 & 5.80 & 9.37\\
    \hline
    bcsstk29 & 2.02 & 2.17 & 2.23 & 1.81 & 1.49 & 1.21\\    
    \hline
    sherman3 & 14.30 & 11.17 & 9.84 & 3.71 & 9.45 & 3.16 \\
    \hline
    sherman5 & 36.98 & 35.79 & 9.1 & 10.07 & 8.19 & 13.23\\
    \hline
    chipcool0 & 33.74 & 8.41 & 9.53 & 2.60 & 6.34 & 2.18\\
    \hline
    e20r0000 & 24.77 & 7.37 & 12.96 & 3.11 & 15.33 & 3.02\\
    \hline
    spmsrtls & 54.83 & 72.02 & 10.25 & 15.04 & 8.80 & 13.51\\
    \hline
    garon1 & 21.95 & 6.09 & 17.79 & 3.07 & 2.67 & 3.01\\
    \hline
    garon2 & 29.29 & 4.28 & 30.01 & 1.67 & 6.64 & 1.61\\
    \hline
    memplus & - & 44.05 & - & 9.81 & - & 7.41\\
    \hline
    saylr4 & 38.31 & 33.04 & 12.17 & 10.43 & 10.19 & 6.56\\     
    \hline
    xenon1 & - & 9.39 & - & 2.94 & - & 2.76 \\
    \hline
    xenon2 & - & 10.02 & - & 2.19 & - & 1.91 \\
    \hline
    venkat01 & 30.61 & 9.26 & 6.61 & 2.71 & 5.16 & 2.10\\
    \hline
    QC2534 & 5.94 & 7.14 & 5.79 & 3.48 & 5.71 & 3.42\\
    \hline
    mplate & - & 7.15 & - & 7.61 & - & 6.11 \\
    \hline
    light\_in\_tissue & 47.23 & 20.36 & 10.29 & 4.30 & 10.22 & 4.30\\
    \hline
    kim1 & 37.23 & 31.92 & 8.40 & 6.62 & 8.97 & 6.21\\
    \hline
    chevron2 & 66.01 & 18.60 & 72.47 & 4.64 & 47.41 & 4.02 \\   
    \hline    
\end{tabular}
}
\caption{{Ratio between total wall-clock time to solve least-squares problems and to perform Richardson's steps within a complete run of Alternating AA {$(m=k)$}, Subselected Alternating AA {$(m=k)$}, and Randomized Alternating AA {$(m=k)$} when ILU(0) and ILUT($\tau$) preconditioners are used. The dashed line indicates examples where the tested solver did not converge to the desired accuracy.}}
\label{ls_vs_richardson_cost}
\end{table}


\begin{table}

\center
{
\begin{tabular}{|c||p{1.8cm}|p{1.8cm}||p{1.8cm}|p{1.8cm}||p{1.8cm}|p{1.8cm}|}
    \hline
         &  \multicolumn{6}{|c|}{\Large $\frac{\textbf{Total wall-clock time for least-squares}}{\textbf{Total wall-clock time for Richardson's steps}}$}\\
    \hline
    & \multicolumn{2}{|c|}{\bf Alternating AA{$(m=20)$}} & \multicolumn{2}{|c|}{\bf Subselect. Altern. AA{$(m=20)$}} & \multicolumn{2}{|c|}{\bf Random. Altern. AA{$(m=20)$}}\\
    \hline
        \textbf{Matrix} & \textbf{ILU(0)} & \textbf{ILUT($\tau$)} & \textbf{ILU(0)} & \textbf{ILUT($\tau$)} & \textbf{ILU(0)} & \textbf{ILUT($\tau$)} \\       
    \hline  
    fidap029 & 1.40 & 0.45 & 0.63 & 0.22 & 0.62 & 0.20\\
    \hline
    raefsky5 & 1.05 & 0.41 & 0.51 & 0.19 & 0.52 & 0.17\\
    \hline
    bcsstk29 & 0.34 & 0.35 & 0.25 & 0.12 & 0.23 & 0.11\\    
    \hline
    sherman3 & 2.19 & 0.41 & - & 0.21 & 0.37 & 0.22\\
    \hline
    sherman5 & 1.23 & 0.53 & 1.14 & 0.53 & 1.12 & 0.50\\
    \hline
    chipcool0 & 2.29 & 0.76 & 0.57 & 0.18 & 0.55 & 0.15\\
    \hline
    e20r0000 & 2.38 & 1.11 & 2.01 & 0.69 & 1.99 & 0.59\\
    \hline
    spmsrtls & 1.09 & 1.05 & 0.59 & 0.52 & 0.56 & 0.51\\
    \hline
    garon1 & 1.62 & 1.10 & 0.83 & 0.55 & 0.80 & 0.52\\
    \hline
    garon2 & 1.60 & 1.01 & - & 0.65 & 0.85 & 0.61\\
    \hline
    memplus & 5.04 & 0.92 & 2.28 & 0.47 & 2.25 & 0.43\\
    \hline
    saylr4 & 3.22 & 3.20 & 1.63 & 1.61 & 1.60 & 1.56\\     
    \hline
    xenon1 & - & - & - & - & - & - \\
    \hline
    xenon2 & - & - & - & - & - & - \\
    \hline
    venkat01 & 1.60 & 0.66 & 0.57 & 0.10 & 0.56 & 0.10\\
    \hline
    QC2534 & 0.20 & 0.07 & 0.13 & 0.05 & 0.11 & 0.04\\
    \hline
    mplate & - & - & - & - & - & -\\
    \hline
    light\_in\_tissue & 2.69 & 0.27 & 0.21 & 0.11 & 1.32 & 0.19\\
    \hline
    kim1 & 1.21 & 0.39 & 0.24 & 0.23 & 0.25 & 0.21\\
    \hline
    chevron2 & 5.52 & 1.21 & - & - & 2.17 & 0.43 \\   
    \hline    
\end{tabular}
}
\caption{{Ratio between total wall-clock time to solve least-squares problems and to perform Richardson's steps within a complete run of Alternating AA {$(m=20)$}, Subselected Alternating AA {$(m=20)$}, and Randomized Alternating AA {$(m=20)$} when ILU(0) and ILUT($\tau$) preconditioners are used. The dashed line indicates examples where the tested solver did not converge to the desired accuracy.}}
\label{ls_vs_richardson_cost_m20}
\end{table}

\subsection{Non-linear deterministic fixed-point iteration: Picard iteration for non-linear time-dependent Boltzmann equation}
\label{subsec:nonlinear}
\newcommand{\vect}[1]{\boldsymbol{#1}}
\def\bbR{\mathbb{R}}
\def\bbS{\mathbb{S}}
\def\SOL{\textsf{c}}
\newcommand{\EmAb}{\mbox{\tiny EA}}
\newcommand{\ES}{\mbox{\tiny ES}}
\newcommand{\IS}{\mbox{\tiny IS}}
\newcommand{\IN}{\mbox{\tiny IN}}
\newcommand{\OUT}{\mbox{\tiny OUT}}
\newcommand{\TOTAL}{\mbox{\tiny{Tot}}}       

{Even though the analysis in this paper focuses on the case of linear fixed-point problems, we test the variants of AA-based solvers considered in the last section on non-linear fixed-point problems and report the results in this section. 
Here}
we chose the non-linear systems that arise from implicit time discretization of the non-relativistic Boltzmann equation, which is a widely used model for neutrino transport in nuclear astrophysics applications \cite{MEZZACAPPA1999281}. 
The non-relativistic Boltzmann equation is given by
\begin{equation}\label{eq:Boltzmann}
  \partial_t{f} + \mathcal{T}(f) = \mathcal{C}(f)\:,
\end{equation}
where $f(\vect{x},\omega,\varepsilon,t)$ denotes the neutrino distribution function that describes the density of neutrino particles at position $\vect{x}\in\bbR^3$
traveling along direction $\omega\in\bbS^2$ with energy $\varepsilon\in\bbR^{+}$
at time $t\in\bbR^+$.
Here the advection and collision operators are denoted by $\mathcal{T}$  and $\mathcal{C}$, respectively. 
Implicit-explicit (IMEX) time integration schemes \cite{pareschiRusso_2005} are often used to solve \eqref{eq:Boltzmann}, in which the collision term is treated implicitly to relax the excessive explicit time-step restriction, while an explicit advection term is used to avoid spatially coupled non-linear systems.
With a simple backward Euler method, the implicit stage of an IMEX scheme applied to \eqref{eq:Boltzmann} at time $t^n$ takes the form
\begin{equation}\label{eq:Implicit}
    f(\vect{x},\omega,\varepsilon,t^{n+1}) = f(\vect{x},\omega,\varepsilon,t^{n}) + \Delta t \,\mathcal{C}(f(\vect{x},\cdot,\cdot,t^{n+1}))\:, \quad\forall\vect{x}\in\bbR^3\:,
\end{equation}
where we follow the existing approach \cite{Paul_Laiu_2021} and write the (space-time homogeneous) neutrino collision operator $\mathcal{C}$ as 
\begin{equation}
    \mathcal{C}(f) = {\eta_{\TOTAL}}(f) -  {\chi_{\TOTAL}}(f)\,f\:,
\end{equation}
where ${\eta_{\TOTAL}}$ denotes the total emissivity and ${\chi_{\TOTAL}}$ denotes the total opacity.
We then formulate \eqref{eq:Implicit} at each time step into a non-linear fixed-point problem \cite{Paul_Laiu_2021}
\begin{equation}\label{eq:fixed_point_Boltzmann}
    f^{n+1} = G(f^{n+1})\, \text{ with }\,
    G(f^{n+1}):=({f^{n}+\Delta t \eta_{\TOTAL}(f^{n+1})})/({1+\Delta t \chi_{\TOTAL}(f^{n+1})})\:.
\end{equation}
This formulation ensures that the fixed-point operator $G$ is a contraction and allows for the standard Picard iteration as well as variants of AA to solve \eqref{eq:Implicit}.

In the numerical tests, we solve the spatially-homogeneous implicit system \eqref{eq:fixed_point_Boltzmann} at one time step, in which we apply a discrete ordinate angular discretization with a 110-point Lebedev quadrature rule on $\bbS^2$ and 64 geometrically progressing energy nodes in $[0,300]$ (MeV). 
Here we compare the performance of five iterative solvers: Picard iteration, standard AA, Alternating AA, and two versions of Reduced Alternating AA -- Subselected Alternating AA and Randomized Alternating AA. 
The history length $m=3$ is used for all four AA variants, and the alternating ones perform Anderson mixing every $p=3$ iterations. The Subselected Alternating AA and Randomized Alternating AA solvers are implemented as presented in Section~\ref{section:reduced_aar} with the reduced dimension $s$ chosen to satisfy the bound \eqref{error_bound2}, where $\epsilon=10^{-8}$ is used.
Each solver is tested on problems at six different matter density values. In general, higher matter densities correspond to stronger collision effects, which make the implicit system more stiff and require more iterations to converge. 
In the test, the total emissivity and opacity are computed using opacity kernels from literature
\cite{bruenn_1985}.

Figure~\ref{fig:Boltzmann} reports the iteration count and computation time required for each compared solver at various matter densities. These reported results are averaged over 30 runs. Figure~\ref{fig:Boltzmann_Iter} confirms that AA and Alternating AA require significantly fewer iterations to reach the prescribed $10^{-10}$ relative tolerance than the Picard iteration, especially for problems at higher densities (more stiff). Randomized Alternating AA essentially resembles the iteration counts of Alternating AA, while the Subselected Alternating AA requires much higher iteration counts than other AA variants.
Figure~\ref{fig:Boltzmann_Time} illustrates that AA actually takes more computation time than Picard iteration, which indicates that the additional cost of solving least-squares problems outweighs the reduction of iteration counts. Alternating AA lowers the computation time of AA by reducing the frequency of solving the least-squares problems, and Randomized Alternating AA further improves the computation time by approximating the least-squares solution with solution to a sparsely projected smaller problem.
{The results reported in Table~\ref{tab:LS_eval_time_ratio_Boltzmann} confirm the conjecture above by showing that, on average, the computation time for solving the least-squares problems in AA ($t_{\text{LS}}$) is indeed higher than the computation time for evaluating the fixed-point operator ($t_{G}$) in this example. From Table~\ref{tab:LS_eval_time_ratio_Boltzmann}, the ratio, ${t_{\text{LS}}}/{t_{G}}$, is lowered in Alternating AA and is further reduced in the both the Subselected and Randomized Alternating AA solvers, which is reflected in the total computation time in Figure~\ref{fig:Boltzmann_Time} for solvers share similar iteration counts.}

\begin{figure}[h]
  \centering
  \subfloat[]{\label{fig:Boltzmann_Iter}\includegraphics[width=0.475\textwidth]{./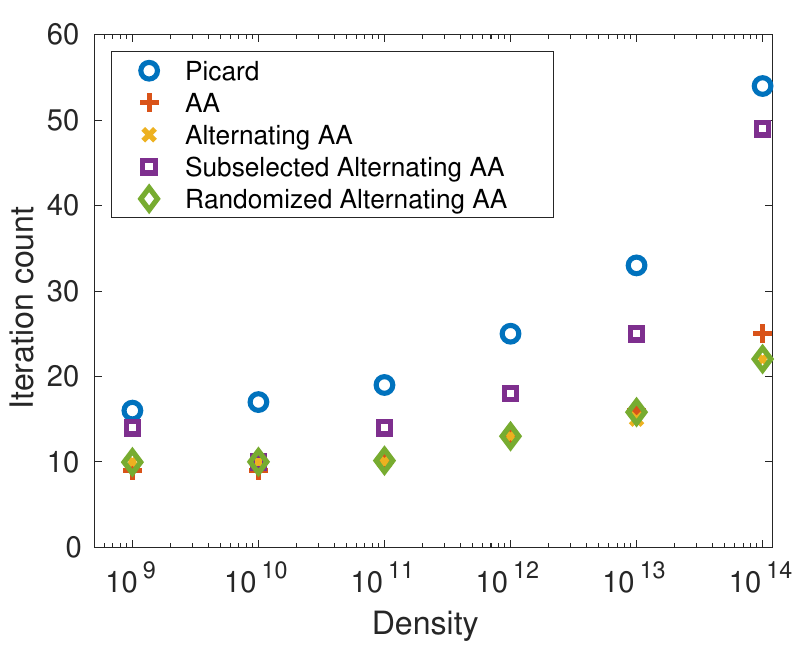}}~~
  \subfloat[]{\label{fig:Boltzmann_Time}\includegraphics[width=0.5\textwidth]{./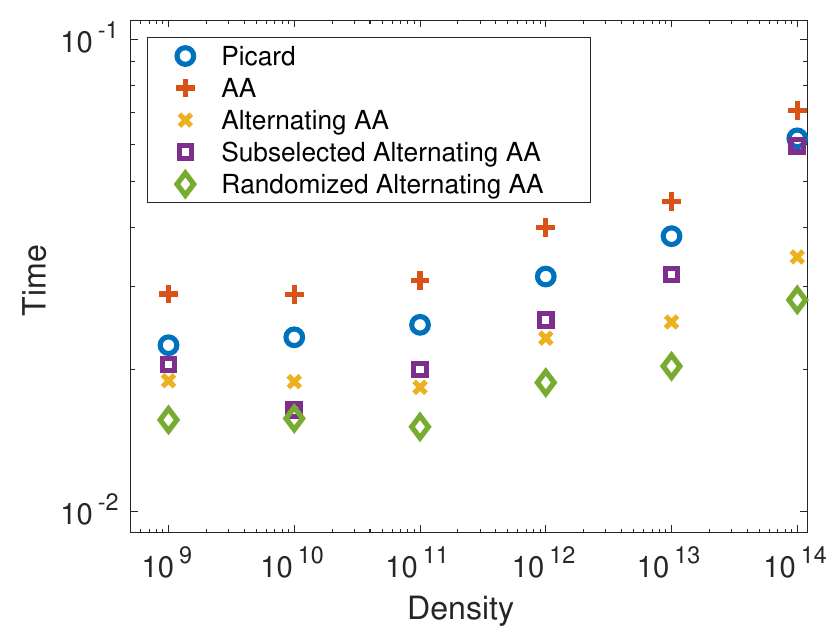}}  
  \caption{\label{fig:Boltzmann}Iteration counts and computation time (in $\log$ scale) for various fixed-point solvers for solving the non-linear system \eqref{eq:fixed_point_Boltzmann} at different matter density, which corresponds to the stiffness of the system.}
\end{figure}
\begin{table}[h]
{    
    \centering
    \begin{tabular}{|c||c|c|c|c|c|}
    \hline
         Solver & Picard & AA & Alternating AA & Subselected Alternating AA & Randomized Alternating AA \\
         \hline
         ${t_{\text{LS}}} / {t_{G}}$ &  0  &  2.04  &  0.80  &  0.23  &  0.25\\
         \hline
    \end{tabular}
    \caption{{ The ratio between $t_{\text{LS}}$ and $t_{G}$ for various fixed-point solvers. Here $t_{\text{LS}}$ and $t_{G}$ denote the computation time spent in solving the least-squares problems and in evaluating the fixed-point operator $G$, respectively. The reported results are averaged over 30 runs and over the six density values.}
    }
    \label{tab:LS_eval_time_ratio_Boltzmann}
}
\end{table}

\section{Conclusion and future development}
\label{section:conclusions}
Although AA has been shown to significantly improve the convergence of fixed-point iterations in several scientific applications, effectively performing AA for large scale problems without introducing excessive computational burden remains still a challenge. One possibility to reduce the computational cost consists in projecting the least-squares to compute the Anderson mixing onto a projection subspace, but this may compromise the convergence of the scheme. 

We derived rigorous theoretical bounds for AA that allow for efficient approximate calculations of the residual, which reduce computational time while maintaining convergence. The theoretical bounds provide useful insights to judiciously inject inaccuracy in the calculations whenever this allows to alleviate the computational burden without compromising the final accuracy of the physics based solver. In particular, we consider situations where the inaccuracy arises from projecting the least-squares problem onto a subspace. 

Guided by the theoretical bounds, we constructed a heuristic that  dynamically adjusts the dimension of the projection subspace at each iteration. As the process approaches convergence, the backward error decreases, which allows for an increase of the inaccuracy of the least-squares calculations while still maintaining the residual norm within the prescribed bound. This reduces the need for very accurate calculations needed to converge. 
While previous theoretical results provide error estimates when the accuracy is fixed a priori throughout the fixed-point scheme, thus potentially compromising the final attainable accuracy, our study allows for adaptive adjustments of the accuracy to perform AA while still ensuring that the final residual norm drops below a prescribed threshold. 
The resulting numerical scheme, called Reduced Alternating AA, is clearly preferable over standard Alternating AA when the computational budget is limited.

Numerical results on linear systems and non-linear non-relativistic Boltzmann equation show that our heuristic can be used to effectively reduce the computational time of Reduced Alternating AA without affecting the final attainable accuracy compared to traditional methods to apply AA. 

Future work will be dedicated to apply Reduced Alternating AA to non-linear stochastic fixed point iterations arising from: (i) Monte Carlo methods for solving the Boltzmann equation and (ii) training of physics informed graph convolutional neural networks for the prediction of materials properties from atomic information. 

\section*{Conflict of interest statement}
The authors do not have any conflict of interest to declare. 

\section*{Acknowledgement}
Massimiliano Lupo Pasini thanks Dr.\ Vladimir Protopopescu for his valuable feedback in the preparation of this manuscript.
Paul Laiu thanks Dr.\ Victor DeCaria for insightful discussions. 
This work was supported in part by the Office of Science of the Department of Energy, by the Exascale Computing Project (17-SC-20-SC), a collaborative effort of the U.S. Department of Energy Office of Science and the National Nuclear Security Administration, and by the Artificial Intelligence Initiative as part of the Laboratory Directed Research and Development (LDRD) Program of Oak Ridge National Laboratory managed by UT-Battelle, LLC for the US Department of Energy under contract DE-AC05-00OR22725.

\bibliography{wileyNJD-VANCOUVER}%

\section*{Author Biography}

\begin{biography}{\includegraphics[width=66pt,height=86pt]{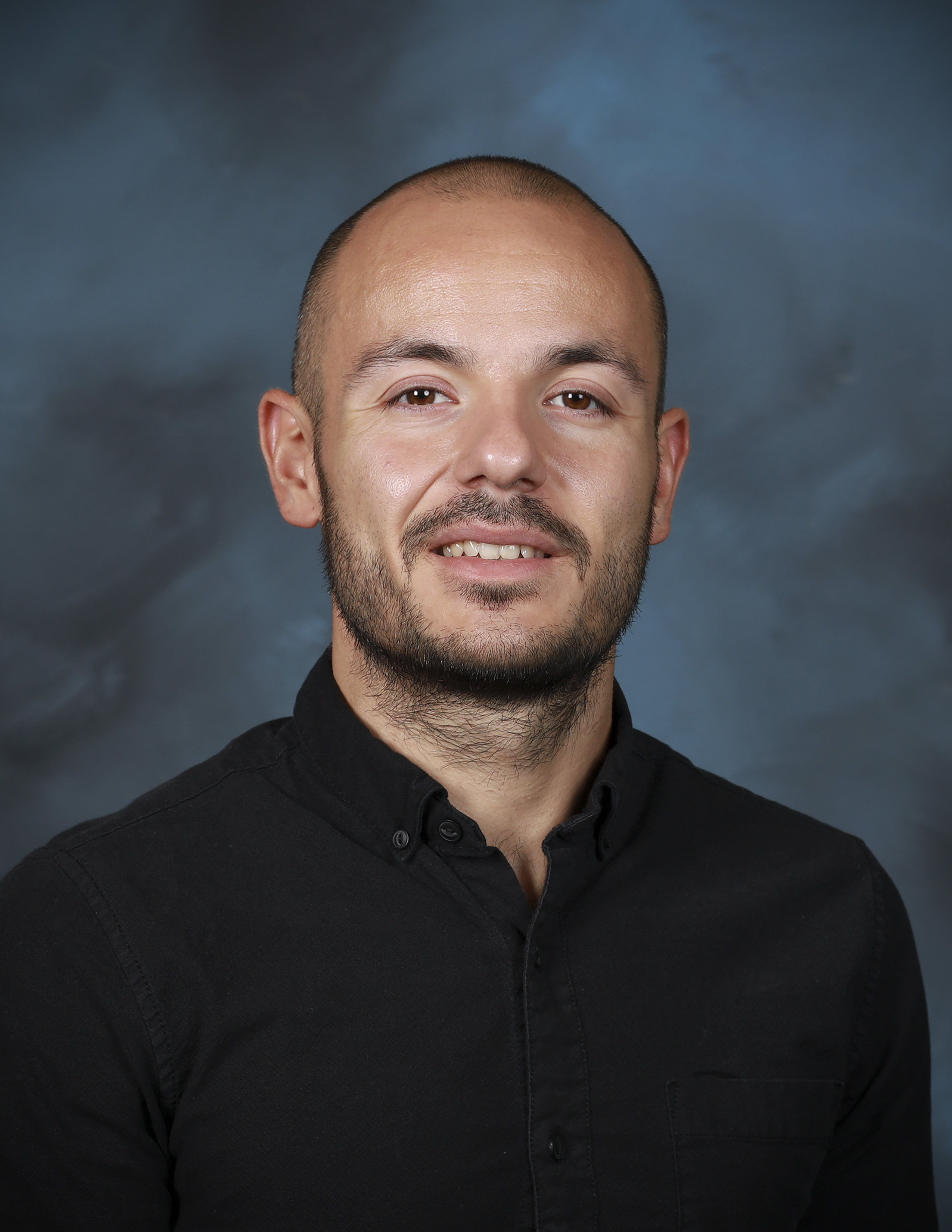}}{\textbf{Massimiliano Lupo Pasini.} Massimiliano (Max) Lupo Pasini obtained his Bachelor of Science and Master of Science in Mathematical Engineering at the Politecnico di Milano in Milan, Italy. The focus of his undergraduate and master studies was statistics and discretization techniques and reduction order models for partial differential equations. He obtained his PhD in Applied Mathematics at Emory University in Atlanta (GA) in May 2018. The main topic of his doctorate work was the development of efficient and resilient linear solvers for upcoming computing architectures moving towards exascale. Upon graduation, Max joined the Oak Ridge National Laboratory (ORNL) as a Postdoctoral Researcher Associate in the Scientific Computing Group at the National Center for Computational Sciences (NCCS). Since 2020 Max has been a Data Scientist in the Scalable Algorithms and Coupled Physics Group in the Advanced Computing Methods for Engineered Systems Section of the Computational Sciences and Engineering Division at ORNL. Max’s research focuses on the development of surrogate models for material sciences, scalable hyper parameter optimization techniques for deep learning models, and acceleration of computational methods for physics applications. He is currently the lead of the Artificial Intelligence for Scientific Discovery thrust of the ORNL Artificial Intelligence Initiative.}
\end{biography}

\begin{biography}{\includegraphics[width=66pt,height=86pt]{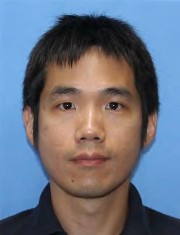}}{\textbf{M.~Paul Laiu.} Paul Laiu is a Staff Mathematician in the Multiscale Methods and Dynamics Group at Oak Ridge National Laboratory.  His research interest includes numerical optimization, surrogate modeling, and numerical schemes for various partial differential equations in kinetic theory. His work focuses on the development of mathematical tools that accelerate simulation of multiscale systems while preserving the structure of the multiscale phenomena. Paul received his Ph.D. degree in Electrical and Computer Engineering from University of Maryland College Park in 2016.  Before joining ORNL in 2017, he was a postdoctoral research associate at the Mathematics Department in University of Tennessee Knoxville.}
\end{biography}

\end{document}